\newtheorem{theorem}{Theorem}
\newtheorem{lemma}[theorem]{Lemma}
\newtheorem{corollary}[theorem]{Corollary}
\newtheorem{proposition}[theorem]{Proposition}
\theoremstyle{remark}
\theoremstyle{definition}
\newtheorem{definition}[theorem]{Definition}
\numberwithin{theorem}{section} \numberwithin{equation}{section}
\numberwithin{example}{section}
\title{On the tenth-order mock theta functions}
\author{Eric T. Mortenson}
\begin{document}

\date{9 October 2016}

\subjclass[2010]{11B65, 11F27}

\keywords{mock theta functions, Appell--Lerch functions}

\begin{abstract}
Using properties of Appell--Lerch functions, we give insightful proofs for six of Ramanujan's identities for the tenth-order mock theta functions.  
\end{abstract}

\address{Max-Planck-Institut f\"ur Mathematik, Vivatsgasse 7, 53111 Bonn, Germany}
\email{etmortenson@gmail.com}
\maketitle
\setcounter{section}{-1}
\setcounter{tocdepth}{1} 

\section{Notation}

 Let $q:=q_{\tau}=e^{2 \pi i \tau}$, $\tau\in\mathbb{H}:=\{ z\in \mathbb{C}| \textup{Im}(z)>0 \}$, and define $\mathbb{C}^*:=\mathbb{C}-\{0\}$.  Recall
\begin{gather}
(x)_n=(x;q)_n:=\prod_{i=0}^{n-1}(1-q^ix), \ \ (x)_{\infty}=(x;q)_{\infty}:=\prod_{i\ge 0}(1-q^ix),\notag \\
 j(x;q):=(x)_{\infty}(q/x)_{\infty}(q)_{\infty}=\sum_{n=-\infty}^{\infty}(-1)^nq^{\binom{n}{2}}x^n,\notag\\
{\text{and }} j(x_1,x_2,\dots,x_n;q):=j(x_1;q)j(x_2;q)\cdots j(x_n;q),\notag
\end{gather}
where in the penultimate line the equivalence of product and sum follows from Jacobi's triple product identity.    Here $a$ and $m$ are integers with $m$ positive.  Define
\begin{gather*}
J_{a,m}:=j(q^a;q^m), \ \ J_m:=J_{m,3m}=\prod_{i\ge 1}(1-q^{mi}), \ {\text{and }}\overline{J}_{a,m}:=j(-q^a;q^m).
\end{gather*}
We will use the following definition of an Appell--Lerch function \cite{HM, Zw2}
\begin{equation}
m(x,q,z):=\frac{1}{j(z;q)}\sum_{r=-\infty}^{\infty}\frac{(-1)^rq^{\binom{r}{2}}z^r}{1-q^{r-1}xz}.\label{equation:mdef-eq}
\end{equation}

\section{Introduction}
Ramanujan's mock theta functions have puzzled and fascinated mathematicians for decades.  After work of Zwegers \cite{Zw2}, the functions may be viewed as holomorphic parts of weak Maass forms \cite{BrO2, BrOR}.   Here we will revisit the tenth-order mock theta functions
{\allowdisplaybreaks \begin{align}
{\phi}(q)&=\sum_{n\ge 0}\frac{q^{\binom{n+1}{2}}}{(q;q^2)_{n+1}}, \ \ {\psi}(q)=\sum_{n\ge 0}\frac{q^{\binom{n+2}{2}}}{(q;q^2)_{n+1}}, \label{equation:tenth-fns}\\ 
& \ \ \ \ \ {X}(q)=\sum_{n\ge 0}\frac{(-1)^nq^{n^2}}{(-q;q)_{2n}}, \ \  {\chi}(q)=\sum_{n\ge 0}\frac{(-1)^nq^{(n+1)^2}}{(-q;q)_{2n+1}},\notag
\end{align}}%
which satisfy many identities such as the slightly-rewritten \cite{C1,C2}
{\allowdisplaybreaks \begin{align}
q^{2}\phi(q^9)-\frac{\psi(\omega q)-\psi(\omega^2 q)}{\omega - \omega^2}
&=-q\frac{J_{1,2}}{J_{3,6}}\frac{J_{3,15}J_{6}}{J_{3}},\label{equation:tenth-id-1}\\
q^{-2}\psi(q^9)+\frac{\omega \phi(\omega q)-\omega^2\phi(\omega^2 q)}{\omega - \omega^2}
&=\frac{J_{1,2}}{J_{3,6}}\frac{J_{6,15}J_{6}}{J_{3}},\label{equation:tenth-id-2}\\
X(q^9)-\frac{\omega \chi(\omega q)-\omega^2\chi(\omega^2 q)}{\omega - \omega^2}
&=\frac{\overline{J}_{1,4}}{\overline{J}_{3,12}}\frac{J_{18,30}J_{3}}{J_{6}},\label{equation:tenth-id-3}\\
\chi(q^9)+q^{2}\frac{ X(\omega q)-X(\omega^2 q)}{\omega - \omega^2}&=-q^3\frac{\overline{J}_{1,4}}{\overline{J}_{3,12}}
\frac{J_{6,30}J_{3}}{J_{6}},\label{equation:tenth-id-4}
\end{align}}%
where $\omega$ is a primitive third root of unity, as well as the \cite{C3}
\begin{align}
\phi(q)-q^{-1}\psi(-q^4)+q^{-2}\chi(q^8)&=\frac{\overline{J}_{1,2}j(-q^2;-q^{10})}{J_{2,8}},\label{equation:RLN-id-five}\\
\psi(q)+q\phi(-q^4)+X(q^8)&=\frac{\overline{J}_{1,2}j(-q^6;-q^{10})}{J_{2,8}}.\label{equation:RLN-id-six}
\end{align}

The six identities were originally found in the lost notebook \cite{RLN} but first proved by Choi \cite{C1, C2, C3}.  Identities (\ref{equation:tenth-id-1})--(\ref{equation:tenth-id-4}) were recently given significantly shorter proofs by Zwegers \cite{Zw3}.  In this note, we will give short proofs of Ramanujan's six identities for the tenth-order mock theta functions using a recent result of Hickerson and the author: 
\begin{theorem} \label{theorem:msplit-general-n} \cite[Theorem $3.5$]{HM} For generic $x,z,z'\in \mathbb{C}^*$ 
{\allowdisplaybreaks \begin{align}
 D_n(x,q,z,z')=z' J_n^3  \sum_{r=0}^{n-1}
\frac{q^{{\binom{r}{2}}} (-xz)^r
j\big(-q^{{\binom{n}{2}+r}} (-x)^n z z';q^n\big)
j(q^{nr} z^n/z';q^{n^2})}
{j(xz;q) j(z';q^{n^2}) j\big(-q^{{\binom{n}{2}}} (-x)^n z';q^n)j(q^r z;q^n\big )},
\end{align}}%
where
\begin{equation}
D_n(x,q,z,z'):=m(x,q,z) - \sum_{r=0}^{n-1} q^{{-\binom{r+1}{2}}} (-x)^r m\big({-}q^{{\binom{n}{2}-nr}} (-x)^n, q^{n^2}, z' \big).
\label{equation:Dn-def}
\end{equation}
\end{theorem}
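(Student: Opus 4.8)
The key structural observation is that, among the terms defining $D_n(x,q,z,z')$, the variable $z$ appears only in $m(x,q,z)$ while the variable $z'$ appears only in the finite sum over $r$. Hence the dependence of $D_n$ on each of $z$ and $z'$ is governed entirely by the way a single Appell--Lerch function $m(\,\cdot\,,\,\cdot\,,w)$ depends on its third argument $w$. Directly from \eqref{equation:mdef-eq} one checks that this dependence is \emph{explicit}: for fixed first two arguments, $m(x,q,z_1)-m(x,q,z_0)$ is a ratio of theta functions (the change-of-$z$ formula, which is exactly the $n=1$ case of the present theorem). Consequently $D_n(x,q,z,z')-D_n(x,q,z_0,z_0')$ is, for any base point $(z_0,z_0')$, an \emph{explicitly computable} theta quotient in $(z,z')$. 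The plan is therefore to (i) match this explicit $(z,z')$-dependence against the right-hand side, and (ii) pin down the remaining scalar by evaluating at one convenient base point.

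For step (i) I would verify two theta-function identities. First, subtracting the $z_0$-version kills the entire $r$-sum, so $D_n(x,q,z,z')-D_n(x,q,z_0,z')=m(x,q,z)-m(x,q,z_0)$, and I must check that the stated right-hand side changes by this same quotient as $z$ varies. Second, $D_n(x,q,z,z')-D_n(x,q,z,z_0')=-\sum_{r}q^{-\binom{r+1}{2}}(-x)^r\big(m(X_r,q^{n^2},z')-m(X_r,q^{n^2},z_0')\big)$ with $X_r:=-q^{\binom n2-nr}(-x)^n$, each difference again an explicit quotient at modulus $q^{n^2}$; I must check the right-hand side transforms in $z'$ to match this. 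Both checks are applications of the Jacobi triple product together with standard manipulations of $j(\,\cdot\,;q)$.

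For step (ii) I would exploit the residue-wise dissection of the defining series in \eqref{equation:mdef-eq}: splitting the summation index modulo $n$ re-sums each class into an Appell--Lerch function of modulus $q^{n^2}$, which identifies a distinguished configuration of $(z,z')$ for which the finite sum reconstructs $m(x,q,z)$ exactly, so that $D_n=0$ there; at the corresponding specialization a theta factor in the numerator of the right-hand side should vanish as well, matching the two sides at the base point. Equivalently, one may fix the scalar by comparing a single residue of both sides in the variable $z'$, since a theta quotient is determined by its zeros and poles up to a constant.

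The main obstacle will be the theta-function bookkeeping in step (i): combining the $n$ quotients produced by the change-of-$z$ corrections at modulus $q^{n^2}$ into the single mixed-modulus expression on the right, which simultaneously carries thetas of moduli $q$, $q^n$, and $q^{n^2}$ and, in particular, produces the prefactor $J_n^3$ rather than a power of $J_{n^2}$. Getting these products to collapse correctly---using product identities that split $j(\,\cdot\,;q^n)$ and $j(\,\cdot\,;q^{n^2})$ along residue classes and that relate $J_n$ to $J_{n^2}$---is where essentially all of the computational work lies; the structural reduction above is what makes that work finite and organized.
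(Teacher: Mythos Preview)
The paper does not actually prove this theorem: it is quoted verbatim as \cite[Theorem~3.5]{HM} and used as a black box throughout (only its $n=2$ and $n=3$ specializations, Corollaries~\ref{corollary:msplitn2zprime} and~\ref{corollary:msplitn3zprime}, ever appear in the arguments). There is therefore no proof in the present paper against which to compare your proposal.

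That said, your outline is a reasonable sketch of how one would go about proving such an identity, and the ingredients you name are the right ones. Your step~(ii) is in fact the heart of the matter: splitting the summation index in \eqref{equation:mdef-eq} into residue classes modulo~$n$ does produce Appell--Lerch functions at modulus $q^{n^2}$, and this is exactly what generates the sum in \eqref{equation:Dn-def} for a specific choice of~$z'$ tied to~$z$ (namely $z'=z^n$ up to a power of~$q$). Once that base-point identity is in hand, the general $z,z'$ case follows from two applications of \eqref{equation:changing-z}, as you say. Your honest acknowledgement that the bookkeeping in step~(i) --- reconciling theta functions at moduli $q$, $q^n$, $q^{n^2}$ and producing the factor $J_n^3$ --- is where the work lies is accurate; in practice that simplification is carried out once and for all in \cite{HM} rather than re-derived here.

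One point to tighten: you describe step~(ii) as ``pinning down the remaining scalar,'' but the dissection argument actually establishes the full identity at the base configuration, not merely a normalizing constant. So step~(ii) is doing more than you give it credit for, and step~(i) is correspondingly a transfer from that base point rather than an independent verification of functional form.
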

\noindent In so doing, we will keep this note as independent as possible from Choi's work.  Although we will take Choi's Hecke-type double-sum expansions of the four functions $\phi$, $\psi$, $X$, and $\chi$, that is where the similarity of our papers and any dependence ends.

In Section \ref{section:notation}, we recall background information.  In Section \ref{section:hecke}, we take Choi's Hecke-type double-sum expansions of the four functions and use a specialization of \cite[Theorem $1.3$]{HM} to express the double-sums in terms of the $m(x,q,z)$ function.  We see in Section \ref{section:Dn-forms} that once identities (\ref{equation:tenth-id-1})--(\ref{equation:RLN-id-six}) have been written in terms of Appell--Lerch functions, that the identities may be written in terms of specializations of  the $D_n(x,q,z,z')$ function, so perhaps Ramanujan knew something along the lines of \cite[Theorem $3.5$]{HM}.  In Section \ref{section:single-quotients}, we evaluate the specializations of (\ref{equation:Dn-def}) in terms of single-quotient theta functions.  In Section \ref{section:five-six}, we prove identities (\ref{equation:RLN-id-five}) and (\ref{equation:RLN-id-six}).  In Section \ref{section:first-and-second}, we prove (\ref{equation:tenth-id-1}) and (\ref{equation:tenth-id-2}), and in Section \ref{section:third-and-fourth}, we prove (\ref{equation:tenth-id-3}) and (\ref{equation:tenth-id-4}).

For the interested reader, we point out that \cite[Theorem $3.5$]{HM} and its parent identity \cite[Theorem $3.9$]{HM} also give an elegant proof \cite{HM2} of celebrated results of Bringmann {\em et al}. on Dyson's ranks and Maass forms \cite{BrO2, BrOR}.

\section{Preliminaries}\label{section:notation}

We have the general identities:
\begin{subequations}
{\allowdisplaybreaks \begin{gather}
j(q^n x;q)=(-1)^nq^{-\binom{n}{2}}x^{-n}j(x;q), \ \ n\in\mathbb{Z},\label{equation:j-elliptic}\\
j(x;q)=j(q/x;q)=-xj(x^{-1};q)\label{equation:j-inv},\\
j(x;q)={J_1}j(x,qx,\dots,q^{n-1}x;q^n)/{J_n^n} \ \ {\text{if $n\ge 1$,}}\label{equation:j-mod-inc}\\
j(x;-q)={j(x;q^2)j(-qx;q^2)}/{J_{1,4}},\label{equation:j-neg-mod}\\
j(z;q)=\sum_{k=0}^{m-1}(-1)^kq^{\binom{k}{2}}z^kj((-1)^{m+1}q^{\binom{m}{2}+mk}z^m;q^{m^2}),\label{equation:j-split}\\
j(x^n;q^n)={J_n}j(x,\zeta_nx,\dots,\zeta_n^{n-1}x;q^n)/{J_1^n}\ \ {\text{if $n\ge 1$}}.\label{equation:j-mod-dec}
\end{gather}}%
\end{subequations}
where $\zeta_n$ is a primitive $n$-th root of unity.   We state additional useful results:
\begin{proposition} \cite[Theorems $1.0$, $1.1$, and $1.2$]{H1}  For generic $x,y,z\in \mathbb{C}^*$ 
 \begin{subequations}
{\allowdisplaybreaks \begin{gather}
j(qx^3;q^3)+xj(q^2x^3;q^3)=j(-x;q)j(qx^2;q^2)/J_2={J_1j(x^2;q)}/{j(x;q)},\label{equation:H1Thm1.0}\\
j(x;q)j(y;q)=j(-xy;q^2)j(-qx^{-1}y;q^2)-xj(-qxy;q^2)j(-x^{-1}y;q^2),\label{equation:H1Thm1.1}\\
j(-x;q)j(y;q)+j(x;q)j(-y;q)=2j(xy;q^2)j(qx^{-1}y;q^2).\label{equation:H1Thm1.2B}
\end{gather}}
\end{subequations}
\end{proposition}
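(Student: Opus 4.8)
The plan is to prove all three identities by the classical \emph{expand--split--reindex} method: write each theta factor as a bilateral series via Jacobi's triple product, split the resulting single or double sum according to the residue classes of the summation indices (modulo $2$ for \eqref{equation:H1Thm1.1} and \eqref{equation:H1Thm1.2B}, modulo $3$ for \eqref{equation:H1Thm1.0}), reindex each class, and recognize the pieces as theta functions of modulus $q^2$, using \eqref{equation:j-elliptic} and \eqref{equation:j-inv} to normalize each factor. For \eqref{equation:H1Thm1.1} I would begin from
\[
j(x;q)j(y;q)=\sum_{m,n\in\mathbb{Z}}(-1)^{m+n}q^{\binom m2+\binom n2}x^my^n
\]
and split according to the parity of $m+n$. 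On the even classes I substitute $m=a+b,\ n=a-b$, and on the odd classes $m=a+b+1,\ n=a-b$; a short computation gives the quadratic exponents $a^2+b^2-a$ and $a^2+b^2+b$ respectively, and in each case $x^my^n$ factors as a monomial in $xy$ times a monomial in $x/y$. Each double sum thus separates into a product of two single theta series in modulus $q^2$: the even part gives $j(-xy;q^2)j(-qx/y;q^2)$ and the odd part gives $-x\,j(-qxy;q^2)j(-q^2x/y;q^2)$ (the sign coming from $(-1)^{m+n}=-1$ on the odd classes). Finally, applying \eqref{equation:j-elliptic} and \eqref{equation:j-inv} to normalize the two $q^2$-factors yields exactly the right-hand side of \eqref{equation:H1Thm1.1}.

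Identity \eqref{equation:H1Thm1.2B} follows from the very same expansion. Summing $j(-x;q)j(y;q)+j(x;q)j(-y;q)$ produces the weight $(-1)^m+(-1)^n$ in front of $q^{\binom m2+\binom n2}x^my^n$, which vanishes whenever $m$ and $n$ have opposite parity and equals $2(-1)^{a+b}$ on the even classes $m=a+b,\ n=a-b$. Hence only the even part survives, and after the same reindexing and a single use of \eqref{equation:j-inv} the two resulting single sums collapse to $2\,j(xy;q^2)j(qx^{-1}y;q^2)$.

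For \eqref{equation:H1Thm1.0} I would treat the two equalities separately. The second equality $j(-x;q)j(qx^2;q^2)/J_2=J_1 j(x^2;q)/j(x;q)$ is pure infinite-product bookkeeping: converting every $j$ to its product form and repeatedly using $(a;q)_\infty=(a;q^2)_\infty(aq;q^2)_\infty$ together with $(x^2;q^2)_\infty=(x;q)_\infty(-x;q)_\infty$ makes both sides telescope to the common value $(q;q)_\infty(-x;q)_\infty(-q/x;q)_\infty(qx^2;q^2)_\infty(q/x^2;q^2)_\infty$. For the first equality I would expand the two modulus-$q^3$ theta functions on the left, send $n\mapsto-n$ in the second series, and combine them into the single sum
\[
\sum_{n\in\mathbb{Z}}(-1)^nq^{(3n^2-n)/2}\big(x^{3n}+x^{1-3n}\big),
\]
which is precisely the quintuple product identity whose product side is the middle expression $j(-x;q)j(qx^2;q^2)/J_2$ (matching the conventions, this is the quintuple product with $z=-1/x$).

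The main obstacle is isolated in \eqref{equation:H1Thm1.0}: unlike the parity splittings, its first equality is genuinely the quintuple product identity, which is not among the general relations \eqref{equation:j-elliptic}--\eqref{equation:j-mod-dec} recorded above, so it must either be imported as known input or itself derived by a three-term dissection of one of the factors; in either approach, reconciling the sign and normalization conventions of the quintuple product with Hickerson's theta notation is the delicate point. For \eqref{equation:H1Thm1.1} and \eqref{equation:H1Thm1.2B} the only real care needed is the bookkeeping of the sign factors introduced by the reindexing and the correct choice, via \eqref{equation:j-inv}, between the $x/y$ and $x^{-1}y$ normalizations of each modulus-$q^2$ factor.
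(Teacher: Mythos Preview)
The paper does not prove this proposition at all: it is simply quoted, with citation, as Theorems~1.0, 1.1, and 1.2 of Hickerson~\cite{H1}, and then used as a black box in later sections. So there is no ``paper's own proof'' to compare against.

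That said, your sketch is sound and is essentially the classical route. The parity dissections you describe for \eqref{equation:H1Thm1.1} and \eqref{equation:H1Thm1.2B} work exactly as you indicate (one small slip: for the odd class the $b$-sum $\sum_b q^{b^2+b}(x/y)^b$ already equals $j(-x^{-1}y;q^2)$ directly after the substitution $b\mapsto -b$, so no extra \eqref{equation:j-elliptic} shift is needed there). Your identification of the first equality in \eqref{equation:H1Thm1.0} with the quintuple product identity is correct, and you are right that this is not derivable from \eqref{equation:j-elliptic}--\eqref{equation:j-mod-dec} alone; in Hickerson's original it is indeed stated as the quintuple product, so importing it is the expected move rather than a gap. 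The product rearrangement for the second equality in \eqref{equation:H1Thm1.0} is routine, as you say.
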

We recall the three-term Weierstrass relation for theta functions \cite[(1.)]{We}, \cite{Ko}:
\begin{proposition}\label{proposition:Weierstrass-id} For generic $a,b,c,d\in \mathbb{C}^*$
\begin{equation}
j(ac,a/c,bd,b/d;q)=j(ad,a/d,bc,b/c;q)+b/c \cdot j(ab,a/b,cd,c/d;q).\label{equation:Weierstrass}
\end{equation}
\end{proposition}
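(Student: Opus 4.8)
The plan is to treat both sides as functions of a single variable, say $a$, with $b,c,d,q$ fixed and generic, and to exploit that the three products of theta functions all live in the same low-dimensional space, on which the claimed identity is forced by vanishing at enough points.

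First I would record the quasi-periodicity. Writing the three terms as $L(a):=j(ac,a/c,bd,b/d;q)$, $R_1(a):=j(ad,a/d,bc,b/c;q)$, and $R_2(a):=(b/c)\,j(ab,a/b,cd,c/d;q)$, I would apply the elliptic transformation (\ref{equation:j-elliptic}) in the form $j(qx;q)=-x^{-1}j(x;q)$ to the two $a$-dependent factors in each term and check that under $a\mapsto qa$ each of $L,R_1,R_2$ is multiplied by the \emph{same} factor $a^{-2}$ (the two sign contributions cancel and the prefactors combine to $a^{-2}$ in every case). Hence $g(a):=L(a)-R_1(a)-R_2(a)$ satisfies $g(qa)=a^{-2}g(a)$, so $g$ lies in the two-dimensional space $V$ of theta functions carrying this multiplier; a nonzero element of $V$ has exactly two zeros in a fundamental annulus $\mathbb{C}^*/q^{\mathbb{Z}}$.

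Next I would show $g$ vanishes at the three points $a=b$, $a=c$, $a=d$. At $a=b$ the factor $j(a/b;q)$ in $R_2$ becomes $j(1;q)=0$, so $g(b)=L(b)-R_1(b)$; but $L(b)$ and $R_1(b)$ are literally the same product $j(bc,b/c,bd,b/d;q)$, giving $g(b)=0$. At $a=d$ the factor $j(a/d;q)$ in $R_1$ vanishes, and after cancelling the common factors $j(cd;q)$ and $j(bd;q)$ from $L(d)$ and $R_2(d)$ the remaining bracket collapses upon applying the inversion $j(x;q)=-x\,j(x^{-1};q)$ from (\ref{equation:j-inv}) to $j(d/b;q)$ and $j(c/d;q)$, the coefficient $b/c$ being exactly absorbed. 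Symmetrically, at $a=c$ the factor $j(a/c;q)$ in $L$ vanishes, and $R_1(c)+R_2(c)$ reduces to zero after applying the same inversion to $j(c/b;q)$.

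Finally, since $g\in V$ vanishes at the three points $a=b,c,d$, which are pairwise inequivalent modulo $q^{\mathbb{Z}}$ for generic $b,c,d$, it has more zeros than a nonzero element of the two-dimensional space $V$ can have, whence $g\equiv 0$ and the identity follows. The main obstacle is the structural input rather than the arithmetic: one must justify that $L,R_1,R_2$ genuinely lie in a two-dimensional space and that a nonzero theta function with multiplier $a^{-2}$ has at most two zeros per period, both standard facts provable by the argument principle applied to $d\log g$ around a fundamental annulus. The genericity of $a,b,c,d$ is what guarantees the three chosen zeros are distinct and that no degenerate vanishing corrupts the zero count.
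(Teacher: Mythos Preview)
Your argument is correct. The quasi-periodicity computation is right: under $a\mapsto qa$ each of $L,R_1,R_2$ picks up the factor $(ac)^{-1}(a/c)^{-1}=a^{-2}$ (and similarly with $d$ or $b$ in place of $c$), so $g$ lies in the two-dimensional space of holomorphic functions on $\mathbb{C}^*$ with that multiplier. The three zero checks at $a=b,c,d$ go through exactly as you describe, and for generic $b,c,d$ these are three distinct points modulo $q^{\mathbb{Z}}$, forcing $g\equiv 0$.

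As for comparison: the paper does not give its own proof of Proposition~\ref{proposition:Weierstrass-id}. It is stated as a classical fact and attributed to Weierstrass \cite{We} and Koornwinder \cite{Ko}, then used as a tool throughout Sections~\ref{section:hecke}--\ref{section:third-and-fourth}. So there is nothing in the paper to compare your argument against; you have supplied a self-contained proof where the paper simply cites the literature. Your approach---matching multipliers and over-counting zeros---is the standard one and is essentially how such three-term theta relations are proved in the references. One minor remark: you might also note that $g$ is holomorphic on all of $\mathbb{C}^*$ (each $j(\,\cdot\,;q)$ is entire in its first argument), which is needed for the Laurent-coefficient recursion that gives $\dim V=2$ and for the argument-principle zero count; you allude to this but it is worth making explicit.
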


The Appell--Lerch function $m(x,q,z)$ satisfies several functional equations and identities, which we collect in the form of a proposition \cite{HM, Zw2}:  

\begin{proposition}  For generic $x,z\in \mathbb{C}^*$
{\allowdisplaybreaks \begin{subequations}
\begin{gather}
m(x,q,z)=m(x,q,qz),\label{equation:mxqz-fnq-z}\\
m(x,q,z)=x^{-1}m(x^{-1},q,z^{-1}),\label{equation:mxqz-flip}\\
m(x,q,z)=m(x,q,x^{-1}z^{-1}),\label{equation:mxqz-fnq-newz}\\
m(x,q,z_1)-m(x,q,z_0)=\frac{z_0J_1^3j(z_1/z_0;q)j(xz_0z_1;q)}{j(z_0;q)j(z_1;q)j(xz_0;q)j(xz_1;q)}.\label{equation:changing-z}
\end{gather}
\end{subequations}}
\end{proposition}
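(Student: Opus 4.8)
The plan is to establish the four identities in the order \eqref{equation:mxqz-fnq-z}, \eqref{equation:mxqz-flip}, \eqref{equation:changing-z}, \eqref{equation:mxqz-fnq-newz}, since the symmetry \eqref{equation:mxqz-fnq-newz} is cleanest as a corollary of the changing-$z$ formula \eqref{equation:changing-z}. The first two are routine manipulations of the defining series \eqref{equation:mdef-eq}. For \eqref{equation:mxqz-fnq-z} I would substitute $z\mapsto qz$ in \eqref{equation:mdef-eq}, use $q^{\binom{r}{2}}q^{r}=q^{\binom{r+1}{2}}$ to rewrite the summand, and shift the summation index by one; combined with the $n=1$ case of \eqref{equation:j-elliptic}, namely $j(qz;q)=-z^{-1}j(z;q)$, the prefactor and the shifted sum recombine into $m(x,q,z)$. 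For \eqref{equation:mxqz-flip} I would write out $x^{-1}m(x^{-1},q,z^{-1})$, clear the denominator $1-q^{r-1}x^{-1}z^{-1}$ by multiplying numerator and denominator by $-q^{1-r}xz$, and reindex by $r\mapsto 2-r$; after invoking $j(z^{-1};q)=-z^{-1}j(z;q)$ from \eqref{equation:j-inv}, the expression collapses to $m(x,q,z)$.

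The main obstacle is \eqref{equation:changing-z}. I would treat the difference $\Phi(z_1):=m(x,q,z_1)-m(x,q,z_0)$ and the proposed right-hand side $R(z_1)$ as meromorphic functions of $z_1$ (with $x$ and $z_0$ fixed and generic) and run a Liouville argument on the torus $\mathbb{C}^*/q^{\mathbb{Z}}$. First, both are $q$-elliptic in $z_1$: for $\Phi$ this is precisely \eqref{equation:mxqz-fnq-z}, while for $R$ it is a short computation applying $j(qw;q)=-w^{-1}j(w;q)$ to each of the four theta factors, whose transformation factors cancel. Next, since the fixed term $m(x,q,z_0)$ is constant in $z_1$, both $\Phi$ and $R$ have at worst simple poles at the two torus points $z_1\equiv 1$ (from $1/j(z_1;q)$, respectively $j(z_1;q)$ in the denominator) and $z_1\equiv 1/x$ (from the pole of the Appell sum, respectively the factor $j(xz_1;q)$). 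The key economy is that I need match residues at only one of these: because the sum of residues of an elliptic function over the torus vanishes, a match at $z_1\equiv 1/x$ forces the match at $z_1\equiv 1$ automatically, so I avoid the harder residue at $z_1\equiv 1$.

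At $z_1\equiv 1/x$ the pole of the Appell sum comes from the single term $-z_1/(1-xz_1)$, so I would find the residue of $m(x,q,z_1)$ there to be $\tfrac{1}{x^{2}}\cdot 1/j(1/x;q)=-1/(x\,j(x;q))$ after using \eqref{equation:j-inv}; the residue of $R$ there, computed from $j(xz_1;q)\approx -J_1^{3}\,x\,(z_1-1/x)$ near $z_1=1/x$ and simplified again by \eqref{equation:j-inv}, is the same value $-1/(x\,j(x;q))$. With the residues matched, $\Phi-R$ is a holomorphic elliptic function of $z_1$, hence constant. To evaluate the constant I would let $z_1\to z_0$: then $\Phi(z_0)=0$, while $R(z_0)=0$ because its numerator carries the factor $j(z_1/z_0;q)$, which degenerates to $j(1;q)=0$. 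Thus the constant is $0$ and \eqref{equation:changing-z} follows. Finally, \eqref{equation:mxqz-fnq-newz} is immediate: setting $z_1=x^{-1}z_0^{-1}$ in \eqref{equation:changing-z} makes $xz_0z_1=1$, so the numerator again contains $j(1;q)=0$, the right-hand side vanishes, and we obtain $m(x,q,z_0)=m(x,q,x^{-1}z_0^{-1})$. The only genuine care needed throughout is bookkeeping the elliptic transformation factors and confirming the pole locations; the residue-sum shortcut is what keeps the work for \eqref{equation:changing-z} short.
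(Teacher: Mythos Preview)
Your proposal is correct. The paper itself does not prove this proposition; it merely records the four identities and attributes them to \cite{HM, Zw2}, so there is no ``paper's own proof'' to compare against. What you have written is essentially the standard argument one finds in those references: identities \eqref{equation:mxqz-fnq-z} and \eqref{equation:mxqz-flip} fall out of reindexing the Appell series together with the elementary theta transformations \eqref{equation:j-elliptic}--\eqref{equation:j-inv}, while \eqref{equation:changing-z} is proved by a Liouville argument on the torus and \eqref{equation:mxqz-fnq-newz} is the specialization $z_1=x^{-1}z_0^{-1}$ thereof. Your residue check at $z_1=1/x$ is accurate (both sides give $-1/(x\,j(x;q))$), your verification that $R$ is $q$-elliptic is a straightforward cancellation of four factors of the type $-w^{-1}$, and the observation that an elliptic function cannot carry a single simple pole is exactly the ``sum of residues'' shortcut you invoke. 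The only cosmetic point is that the residues summing to zero on $\mathbb{C}^*/q^{\mathbb{Z}}$ are residues with respect to $dz_1/z_1$ rather than $dz_1$; this does not affect your argument since you are showing that one residue \emph{vanishes}, which forces the other (simple) pole to be removable regardless of normalization.
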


We point out the $n=2$ and $n=3$ specializations of \cite[Theorem $3.5$]{HM}: 
\begin{corollary} \label{corollary:msplitn2zprime} For generic $x,z,z'\in \mathbb{C}^*$ 
{\allowdisplaybreaks \begin{align}
D_2&(x,q,z,z') \label{equation:msplit2}\\
&=\frac{z'J_2^3}{j(xz;q)j(z';q^4)}\Big [
\frac{j(-qx^2zz';q^2)j(z^2/z';q^{4})}{j(-qx^2z';q^2)j(z;q^2)}-xz \frac{j(-q^2x^2zz';q^2)j(q^2z^2/z';q^{4})}{j(-qx^2z';q^2)j(qz;q^2)}\Big ],\notag
\end{align}}%
where
\begin{equation}
D_2(x,q,z,z'):=m(x,q,z)-m(-qx^2,q^4,z' )+q^{-1}xm(-q^{-1}x^2,q^4,z').\label{equation:D2-def}
\end{equation}
\end{corollary}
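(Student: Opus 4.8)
The plan is to obtain Corollary~\ref{corollary:msplitn2zprime} as the literal $n=2$ specialization of Theorem~\ref{theorem:msplit-general-n}, so the work is bookkeeping rather than discovery. First I would verify the stated form of $D_2(x,q,z,z')$ in \eqref{equation:D2-def} against the general definition \eqref{equation:Dn-def}. Setting $n=2$, the sum $\sum_{r=0}^{n-1}$ runs over $r=0,1$. The $r=0$ term of \eqref{equation:Dn-def} contributes $q^{-\binom{1}{2}}(-x)^0 m\big({-}q^{\binom{2}{2}}(-x)^2,q^4,z'\big)=m(-qx^2,q^4,z')$, since $\binom{2}{2}=1$. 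The $r=1$ term contributes $q^{-\binom{2}{2}}(-x)^1 m\big({-}q^{\binom{2}{2}-2}(-x)^2,q^4,z'\big)=-q^{-1}x\,m(-q^{-1}x^2,q^4,z')$. Subtracting these from $m(x,q,z)$ reproduces \eqref{equation:D2-def} exactly, including the sign on the final term.

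Next I would specialize the right-hand side of Theorem~\ref{theorem:msplit-general-n}. With $n=2$ we have $J_n^3=J_2^3$, $j(z';q^{n^2})=j(z';q^4)$, and the product runs over $r=0,1$. The common denominator factor $j(xz;q)$ and the leading $z'J_2^3$ match the prefactor in \eqref{equation:msplit2}. For each $r$ I would evaluate the four theta factors using $\binom{2}{2}=1$ and $(-x)^2=x^2$: the numerator thetas become $j\big(-q^{1+r}x^2 zz';q^2\big)$ and $j(q^{2r}z^2/z';q^4)$, while the denominator thetas become $j(-qx^2z';q^2)$ (independent of $r$) and $j(q^r z;q^2)$. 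The $r=0$ summand then reads $q^0(-xz)^0\cdot j(-qx^2zz';q^2)\,j(z^2/z';q^4)\big/\big[j(z';q^4)\,j(-qx^2z';q^2)\,j(z;q^2)\big]$, and the $r=1$ summand reads $q^{\binom{1}{2}}(-xz)^1\cdot j(-q^2x^2zz';q^2)\,j(q^2z^2/z';q^4)\big/\big[j(z';q^4)\,j(-qx^2z';q^2)\,j(qz;q^2)\big]$, where $q^{\binom{1}{2}}=1$ and $(-xz)^1=-xz$.

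Factoring $z'J_2^3\big/\big[j(xz;q)\,j(z';q^4)\big]$ out of both summands and collecting the bracket gives precisely the two terms displayed inside the brackets of \eqref{equation:msplit2}, with the $-xz$ prefactor on the second term coming from $(-xz)^1$. I would present this as a short direct computation, displaying the two summands side by side and noting the cancellation of $j(z';q^4)$ into the shared prefactor. The main (and only) obstacle is purely clerical: keeping the binomial exponents $\binom{r}{2}$, $\binom{n}{2}$, and $\binom{n}{2}+r$ straight and correctly tracking the power $(-x)^n=x^2$ and the sign flips, so I would double-check each exponent against the $n=2$ substitution before collecting terms. Since every identity needed is either the definition \eqref{equation:Dn-def} or the statement of Theorem~\ref{theorem:msplit-general-n}, no auxiliary lemma is required.
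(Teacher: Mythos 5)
Your proposal is correct and matches the paper's (implicit) proof exactly: the corollary is stated there simply as the $n=2$ specialization of Theorem~\ref{theorem:msplit-general-n}, and your substitution of $\binom{2}{2}=1$, $(-x)^2=x^2$, $q^{n^2}=q^4$ into both \eqref{equation:Dn-def} and the right-hand side, with the $r=0,1$ terms collected under the common prefactor $z'J_2^3/[j(xz;q)j(z';q^4)]$, is precisely the required bookkeeping. All exponents and signs in your computation check out.
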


\begin{corollary} \label{corollary:msplitn3zprime} For generic $x,z,z'\in \mathbb{C}^*$ 
\begin{align}
D_3(x,q,z,z')&=\frac{z'J_3^3}{j(xz;q)j(z';q^{9})j(x^3z';q^3)}\Big [ 
\frac{1}{z}\frac{j(x^3zz';q^3)j(z^3/z';q^{9})}{j(z;q^3)}\label{equation:msplit3} \\
&\ \ \ \ \ -\frac{x}{q}\frac{j(qx^3zz';q^3)j(q^{3}z^3/z';q^{9})}{j(qz;q^3)}
+\frac{x^2z}{q}\frac{j(q^2x^3zz';q^3)j(q^{6}z^3/z';q^{9})}{j(q^2z;q^3)}\Big ],\notag
\end{align}
where
\begin{align}
D_3(x,q,z,z')&:=m(x,q,z)-m\Big (q^{3}x^3,q^{9},z'\Big )\label{equation:D3-def}\\
&\ \ \ \ \  +q^{-1}xm\Big (x^3,q^{9},z'\Big )-q^{-3}x^2m\Big (q^{-3}x^3,q^{9},z'\Big ).\notag
\end{align}
\end{corollary}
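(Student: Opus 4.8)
The plan is to derive both \eqref{equation:msplit3} and \eqref{equation:D3-def} by simply specializing Theorem~\ref{theorem:msplit-general-n} to $n=3$; no new idea is needed, and the entire argument is bookkeeping of powers of $q$ and $x$ together with repeated use of the elliptic transformation \eqref{equation:j-elliptic}.

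First I would check that the defining relation \eqref{equation:D3-def} is exactly the $n=3$ instance of \eqref{equation:Dn-def}. With $n=3$ we have $\binom{3}{2}=3$ and $(-x)^3=-x^3$, so the general summand $q^{-\binom{r+1}{2}}(-x)^r m\big(-q^{3-3r}(-x)^3,q^9,z'\big)$ simplifies to $q^{-\binom{r+1}{2}}(-x)^r m\big(q^{3-3r}x^3,q^9,z'\big)$. Evaluating at $r=0,1,2$ produces the three terms $m(q^3x^3,q^9,z')$, $-q^{-1}x\,m(x^3,q^9,z')$, and $q^{-3}x^2\,m(q^{-3}x^3,q^9,z')$; subtracting their sum from $m(x,q,z)$ is precisely \eqref{equation:D3-def}.

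Next I would specialize the closed form in Theorem~\ref{theorem:msplit-general-n}. For $n=3$ the prefactor is $z'J_3^3$, the sum runs over $r=0,1,2$, and each theta has modulus $q^3$ or $q^9$; using $(-x)^3=-x^3$ converts the numerator factor $j\big(-q^{3+r}(-x)^3zz';q^3\big)$ into $j(q^{3+r}x^3zz';q^3)$ and the denominator factor $j\big(-q^3(-x)^3z';q^3\big)$ into $j(q^3x^3z';q^3)$. The key step is to apply \eqref{equation:j-elliptic} with base $q^3$: writing $q^{3+r}x^3zz'=q^3\cdot q^rx^3zz'$ and $q^3x^3z'=q^3\cdot x^3z'$ gives $j(q^{3+r}x^3zz';q^3)=-(q^rx^3zz')^{-1}j(q^rx^3zz';q^3)$ and $j(q^3x^3z';q^3)=-(x^3z')^{-1}j(x^3z';q^3)$, which lets me extract the single common factor $j(x^3z';q^3)$ into the denominator appearing in \eqref{equation:msplit3}. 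For each $r$ the residual scalar is $q^{\binom{r}{2}}(-xz)^r\cdot\big(-(q^rx^3zz')^{-1}\big)\cdot(-x^3z')=q^{\binom{r}{2}-r}(-x)^rz^{r-1}$, which equals $1/z$, $-x/q$, and $x^2z/q$ at $r=0,1,2$ respectively, while the surviving thetas are $j(q^rx^3zz';q^3)j(q^{3r}z^3/z';q^9)/j(q^rz;q^3)$ — exactly the bracket of \eqref{equation:msplit3}.

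The computation has no genuine obstacle; the one thing to monitor is the sign and exponent bookkeeping in the elliptic reduction, since each use of \eqref{equation:j-elliptic} contributes a factor $-(q^rx^3zz')^{-1}$ (respectively $-(x^3z')^{-1}$) that must cancel against the explicit $q^{\binom{r}{2}}(-xz)^r$ and the pulled-out $x^3z'$ so that the three coefficients collapse to the clean monomials $1/z$, $-x/q$, $x^2z/q$. Verifying this cancellation term by term completes the proof.
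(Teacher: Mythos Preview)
Your proposal is correct and matches the paper's approach: the paper presents this corollary explicitly as the $n=3$ specialization of Theorem~\ref{theorem:msplit-general-n} and offers no separate proof, so the bookkeeping you describe (evaluating \eqref{equation:Dn-def} at $r=0,1,2$ and reducing the summand of Theorem~\ref{theorem:msplit-general-n} via \eqref{equation:j-elliptic} with base $q^3$) is exactly what is intended. Your term-by-term check of the scalars $1/z$, $-x/q$, $x^2z/q$ and of the surviving theta factors is accurate.
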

We present a result similar to \cite[Theorem $1.3$]{AH} and prove two theta function identities.  
\begin{theorem}  We have
\begin{equation}
j(x;q)j(y;q^6)=\sum_{i=-2}^{2}(-1)^iq^{(i^2-i)/2}x^ij(-q^{3i+9}x^3y^{-1};q^{15})j(q^{2i+1}x^2y;q^{10}).\label{equation:DTH}
\end{equation}
\end{theorem}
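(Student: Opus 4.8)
The plan is to prove \eqref{equation:DTH} by a systematic splitting of both theta functions on the left-hand side followed by an index-matching argument. The governing structure of the claimed identity is revealing: on the left we have a product of theta functions with \emph{nome} $q$ and $q^6$, while on the right the inner theta functions carry nomes $q^{15}$ and $q^{10}$, with an outer sum over $i\in\{-2,-1,0,1,2\}$, i.e.\ over a complete residue system modulo $5$. Since $15=3\cdot 5$ and $10=2\cdot 5$, and since $6=2\cdot 3$, this strongly suggests decomposing $j(x;q)$ into classes modulo $5$ and $j(y;q^6)$ compatibly, so that the product reorganizes along residues of a single summation variable modulo $5$. Concretely, I would first expand both factors via the series form of $j$ from the Notation section, writing $j(x;q)=\sum_m(-1)^mq^{\binom{m}{2}}x^m$ and $j(y;q^6)=\sum_n(-1)^nq^{6\binom{n}{2}}y^n$, so that the left side becomes a double sum $\sum_{m,n}(-1)^{m+n}q^{\binom{m}{2}+6\binom{n}{2}}x^m y^n$.

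Next I would introduce a change of summation variables that separates the exponent into a part tracking the residue $i$ and parts that run over the two independent lattice directions appearing on the right. The right-hand inner factors $j(-q^{3i+9}x^3y^{-1};q^{15})$ and $j(q^{2i+1}x^2y;q^{10})$ when re-expanded as series in their respective nomes will contribute exponents of $x$ in multiples of $3$ and $2$ respectively (times new summation indices, say $s$ and $t$), so the total power of $x$ on the right is $i+3s+2t$ and the total power of $y$ is $n$-type combination $-s+t$. The key algebraic step is to set up the substitution $m=i+3s+2t$ (with $i$ the residue mod $5$) and to read off $n$ correspondingly, then verify that the quadratic exponent $\binom{m}{2}+6\binom{n}{2}$ matches the quadratic form $(i^2-i)/2 + 15\binom{s}{2}-\text{(linear shifts)}+10\binom{t}{2}+\text{(linear shifts)}$ produced on the right after accounting for the factors $q^{3i+9}$ and $q^{2i+1}$ inside the inner thetas. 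Alternatively — and this is likely cleaner — I would apply the splitting identity \eqref{equation:j-split} directly to $j(x;q)$ with $m=5$ to write it as $\sum_{i=0}^{4}(-1)^iq^{\binom{i}{2}}x^i\,j\big(q^{\binom{5}{2}+5i}x^5;q^{25}\big)$, reindex to $i\in\{-2,\dots,2\}$ using the ellipticity \eqref{equation:j-elliptic}, and then fold the factor $j(y;q^6)$ into each term.

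The main obstacle will be the bookkeeping in the quadratic exponents: I must show that the single Gaussian $\binom{m}{2}+6\binom{n}{2}$ factors, residue class by residue class, into exactly the product of the two quadratics $15\binom{s}{2}+\cdots$ and $10\binom{t}{2}+\cdots$ with the prefactor $(-1)^iq^{(i^2-i)/2}$ and the internal shifts $3i+9$ and $2i+1$. This is a diophantine matching: writing $m=i+3s+2t$ and tracking $n$, I need the cross terms and linear terms to cancel so that the exponent separates cleanly into the $q^{15}$ piece and the $q^{10}$ piece. I expect the correct variable change makes the determinant of the relevant lattice equal to $5$ (consistent with the five residues), and the factorization $6=2\cdot 3$, $10=2\cdot 5$, $15=3\cdot 5$ guarantees that the three nomes $q$, $q^{10}$, $q^{15}$ are mutually compatible. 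Once the exponent-matching is verified and the map $(i,s,t)\mapsto(m,n)$ is shown to be a bijection from $\{-2,\dots,2\}\times\mathbb Z^2$ onto $\mathbb Z^2$, both sides agree term-by-term and the identity follows by Jacobi's triple product. As a consistency check I would specialize $x$ or $y$ to collapse the sum, or compare with the analogous result \cite[Theorem $1.3$]{AH} that the statement is advertised to resemble.
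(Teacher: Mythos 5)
Your proposal is correct and is essentially the paper's own proof: the paper expands both factors as series, obtains the double sum $\sum_{r,s}(-1)^{r+s}q^{(r^2-r+6s^2-6s)/2}x^ry^s$, dissects it according to $(r-2s)\bmod 5$, and substitutes $r=3u+2v+i$, $s=-u+v$ (a determinant-$5$ lattice change of variables), exactly matching the map $m=i+3s+2t$, $n=-s+t$ you describe. The exponent bookkeeping you flag as the remaining obstacle goes through directly, separating into $q^{(15u^2+(6i+3)u)/2}$ and $q^{5v^2+(2i-4)v}$ pieces that Jacobi's triple product converts into the two inner theta functions.
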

\begin{proof}  We write
\begin{align*}
j(x;q)j(y;q^6)&=\sum_{r\in\mathbb{Z}}(-1)^rq^{r(r-1)/2}x^r\cdot \sum_{s\in\mathbb{Z}}(-1)^sq^{3s(s-1)}y^s\\
&=\sum_{r,s\in\mathbb{Z}}(-1)^{r+s}q^{(r^2-r+6s^2-6s)/2}x^ry^s.
\end{align*}
Break this into five pieces, depending on $(r-2s)$ mod $5$.  Let $r=2s+5u+i$ with $-2\le i \le 2$.  Then let $s=v-u$, so $r=3u+2v+i:$
{\allowdisplaybreaks \begin{align*}
j(&x;q)j(y;q^6)\\
&=\sum_{i=-2}^2\sum_{u,v\in\mathbb{Z}}(-1)^{2u+3v+i}q^{(15u^2+(6i+3)u)/2+5v^2+(2i-4)v+(i^2-i)/2}x^{3u+2v+i}y^{-u+v}\\
&=\sum_{i=-2}^2(-1)^iq^{(i^2-i)/2}x^i\sum_{u\in\mathbb{Z}}q^{(15u^2+(6i+3)u)/2}(x^3y^{-1})^u
\sum_{v\in\mathbb{Z}}(-1)^vq^{5v^2+(2i-4)v}(x^2y)^v\\
&=\sum_{i=-2}^2(-1)^iq^{(i^2-i)/2}x^ij(-q^{3i+9}x^3y^{-1};q^{15})j(q^{2i+1}x^2y;q^{10}).\qedhere
\end{align*}}%
\end{proof}
\begin{corollary} We have
\begin{align}
j(x;q)j(-x^3;q^6)&=J_{3,15}\Big [ q^3x^{-2}j(-q^{-3}x^5;q^{10})-xj(-q^3x^5;q^{10})\Big ]\label{equation:DCH}\\
&\ \ \ \ \ +J_{6,15}\Big [ j(-qx^5;q^{10})-qx^{-1}j(-q^{-1}x^5;q^{10})\Big ].\notag
\end{align}
\end{corollary}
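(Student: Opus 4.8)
The plan is to obtain this corollary as a direct specialization of the theta identity \eqref{equation:DTH} at $y=-x^3$. The key observation is that this choice trivializes both ``non-$x$'' structures in each summand. First, $x^3y^{-1}=x^3/(-x^3)=-1$, so the factor $j\big(-q^{3i+9}x^3y^{-1};q^{15}\big)$ collapses to the $x$-free theta value $j(q^{3i+9};q^{15})$. Second, $x^2y=-x^5$, so $j\big(q^{2i+1}x^2y;q^{10}\big)$ becomes $j\big(-q^{2i+1}x^5;q^{10}\big)$. Consequently every summand of \eqref{equation:DTH} factors as a constant (in $x$) times a theta function in $x^5$, and the whole problem reduces to identifying the five constants $j(q^{3i+9};q^{15})$ for $i=-2,-1,0,1,2$.

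Next I would evaluate those five constants using only the elementary reflection \eqref{equation:j-inv}, namely $j(u;q)=j(q/u;q)$, together with the vanishing $j(q^{k};q)=0$ for $k\in\mathbb{Z}$ (which follows from \eqref{equation:j-elliptic} and $j(1;q)=0$). For $i=-2$ and $i=-1$ the arguments are $q^{3}$ and $q^{6}$, giving $J_{3,15}$ and $J_{6,15}$ outright. For $i=0$ and $i=1$ the arguments $q^{9}$ and $q^{12}$ reflect via \eqref{equation:j-inv} to $j(q^{6};q^{15})=J_{6,15}$ and $j(q^{3};q^{15})=J_{3,15}$. Finally, for $i=2$ the argument is $q^{15}$, an integral power of the base $q^{15}$, so $j(q^{15};q^{15})=0$ and that summand disappears. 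This pruning is what leaves exactly four surviving terms, paired so that $i=-2$ and $i=1$ both carry $J_{3,15}$ while $i=-1$ and $i=0$ both carry $J_{6,15}$.

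Substituting these constants back and recording the prefactors $(-1)^{i}q^{(i^2-i)/2}x^{i}$ from \eqref{equation:DTH} at each retained $i$, I would group the four terms by their shared constant to recover the two bracketed expressions on the right-hand side. The computation is essentially mechanical bookkeeping; there is no serious obstacle, and the only two points that genuinely make the identity work out are the vanishing of the $i=2$ term and the reflections that collapse five a priori distinct constants onto just $J_{3,15}$ and $J_{6,15}$. In writing it up I would simply verify the exponent $q^{(i^2-i)/2}$ and the sign $(-1)^i$ at each of $i=-2,-1,0,1$ to confirm they produce the coefficients $q^{3}x^{-2}$, $-x$, $-qx^{-1}$, and $1$ multiplying the respective theta functions $j(-q^{-3}x^5;q^{10})$, $j(-q^{3}x^5;q^{10})$, $j(-q^{-1}x^5;q^{10})$, and $j(-qx^5;q^{10})$, matching the stated identity exactly.
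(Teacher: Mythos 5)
Your proposal is correct and is essentially identical to the paper's proof: substitute $y=-x^3$ into \eqref{equation:DTH}, observe that the $i=2$ term vanishes since $j(q^{15};q^{15})=0$, and pair the remaining four terms using $J_{9,15}=J_{6,15}$ and $J_{12,15}=J_{3,15}$. The bookkeeping of signs and exponents you describe matches the stated identity exactly.
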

\begin{proof}  Substitute $y=-x^3$ in (\ref{equation:DTH}):
\begin{equation*}
j(x;q)j(-x^3;q^6)=\sum_{i=-2}^2(-1)^iq^{(i^2-i)/2}x^iJ_{3i+9,15}j(-q^{2i+1}x^5;q^{10}).
\end{equation*} 
The $i=2$ term is zero, and the other terms can be combined in pairs to give the stated results, using $J_{3,15}=J_{12,15}$ and $J_{6,15}=J_{9,15}$.
\end{proof}
\begin{corollary} \label{corollary:cor-two-ids} The following two identities are true,
\begin{align}
J_{1,5}J_{12,30}-qJ_{2,5}J_{6,30}&=J_{1,2}\overline{J}_{3,12}=J_{1}J_{1,6},\label{equation:id-1}\\
J_{4,10}J_{6,15}+qJ_{2,10}J_{3,15}&=\overline{J}_{1,4}J_{3,6}=J_2\overline{J}_{1,3}.\label{equation:id-2}
\end{align}
\end{corollary}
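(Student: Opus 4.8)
The plan is to derive both identities as specializations of the preceding corollary, i.e.\ of the identity \eqref{equation:DCH}, reading its left side $j(x;q)j(-x^3;q^6)$ as the ``single product'' (middle) expression and its right side as the two-term theta combination.

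For \eqref{equation:id-1} I would replace $q$ by $q^2$ and set $x=q$ in \eqref{equation:DCH}. The left side becomes $j(q;q^2)j(-q^3;q^{12})=J_{1,2}\overline{J}_{3,12}$, which is already the middle expression of \eqref{equation:id-1}; under $q\mapsto q^2$ the prefactors $J_{3,15},J_{6,15}$ turn into $J_{6,30},J_{12,30}$, and the two bracketed coefficients become combinations of theta functions to the base $q^{20}$. The key step is to recognize these base-$q^{20}$ coefficients as single theta functions to the base $q^5$: applying \eqref{equation:j-split} with $m=2$ (base $q^5$) gives $J_{1,5}=j(q;q^5)=j(-q^7;q^{20})-q\,j(-q^{17};q^{20})$ and $J_{2,5}=j(q^2;q^5)=j(-q^9;q^{20})-q^2\,j(-q^{19};q^{20})$, after which \eqref{equation:j-elliptic} and \eqref{equation:j-inv} identify the leftover thetas (for instance $j(-q^{17};q^{20})=j(-q^3;q^{20})$ and $j(-q^{11};q^{20})=j(-q^9;q^{20})$). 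Collecting terms turns the two coefficients into $-qJ_{2,5}$ and $J_{1,5}$, so the right side becomes $J_{1,5}J_{12,30}-qJ_{2,5}J_{6,30}$, the first equality of \eqref{equation:id-1}.

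For \eqref{equation:id-2} I would instead keep the base $q$ and set $x=-q$ in \eqref{equation:DCH}. This is a degenerate point: in each bracket the two summands coincide after \eqref{equation:j-inv} (using $j(q^6;q^{10})=j(q^4;q^{10})$ and $j(q^8;q^{10})=j(q^2;q^{10})$), so the right side collapses to $2\big(J_{4,10}J_{6,15}+qJ_{2,10}J_{3,15}\big)$, while the left side is $j(-q;q)\,j(q^3;q^6)$ with $j(-q;q)=2\overline{J}_{1,4}$ (from $(-1;q)_\infty=2(-q;q)_\infty$ together with Jacobi's triple product). Cancelling the common factor $2$ yields $J_{4,10}J_{6,15}+qJ_{2,10}J_{3,15}=\overline{J}_{1,4}J_{3,6}$, the first equality of \eqref{equation:id-2}.

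It remains to verify the trailing product identities $J_{1,2}\overline{J}_{3,12}=J_1J_{1,6}$ and $\overline{J}_{1,4}J_{3,6}=J_2\overline{J}_{1,3}$, which are routine consequences of Jacobi's triple product, the product form $J_m=(q^m;q^m)_\infty$, and \eqref{equation:j-neg-mod}. The main obstacle I anticipate is bookkeeping rather than conceptual: for \eqref{equation:id-1} one must carry the \eqref{equation:j-split} dissection and the exponent arithmetic in the elliptic transformations carefully so as to land on exactly $J_{1,5}$ and $J_{2,5}$ and not on a stray power of $q$, and for \eqref{equation:id-2} one must track the factor-$2$ degeneracy consistently on both sides so that it cancels cleanly.
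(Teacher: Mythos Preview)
Your proposal is correct and follows essentially the same route as the paper: for \eqref{equation:id-1} the paper also specializes \eqref{equation:DCH} at $q\mapsto q^2$, $x=q$ and then invokes \eqref{equation:j-split} with $m=2$ to collapse the base-$q^{20}$ coefficients to $J_{1,5}$ and $-qJ_{2,5}$; for \eqref{equation:id-2} the paper likewise sets $x=-q$ in \eqref{equation:DCH}, uses $\overline{J}_{1,1}=2\overline{J}_{1,4}$, and cancels the common factor $2$. The trailing product equalities are dismissed there, as you do, as elementary rearrangements.
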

\begin{proof} The second equality of each identity is just a product rearrangement.  To prove (\ref{equation:id-1}), we first substitute $x\rightarrow q$, $q\rightarrow q^2$ in (\ref{equation:DCH}):
\begin{equation*}
J_{1,2}\overline{J}_{3,12}=J_{6,30}\Big ( q^{4}\overline{J}_{-1,20}-q\overline{J}_{11,20}\Big )
+J_{12,30}\Big ( \overline{J}_{7,20}-q\overline{J}_{3,20}\Big ).
\end{equation*}
By (\ref{equation:j-split}) with $m=2$, we have
\begin{equation*}
J_{1,5}=\overline{J}_{7,20}-q\overline{J}_{17,20}=\overline{J}_{7,20}-q\overline{J}_{3,20}
\end{equation*}
and 
\begin{equation*}
J_{2,5}=\overline{J}_{9,20}-q^2\overline{J}_{19,20}=\overline{J}_{11,20}-q^3\overline{J}_{-1,20},
\end{equation*}
so
\begin{equation*}
J_{1,2}\overline{J}_{3,12}=J_{6,30}\Big ( -qJ_{2,5}\Big )+J_{12,30}J_{1,5}=J_{1,5}J_{12,30}-qJ_{2,5}J_{6,30}.
\end{equation*}
To prove (\ref{equation:id-2}), we substitute $x\rightarrow -q$ in (\ref{equation:DCH}) and use $\overline{J}_{1,1}=\overline{J}_{0,1}=2\overline{J}_{1,4}$:
\begin{align*}
2\overline{J}_{1,4}J_{3,6}&=\overline{J}_{1,1}J_{3,6}=J_{3,15}\Big ( qJ_{2,10}+qJ_{8,10}\Big )+J_{6,15}\Big ( J_{6,10}+J_{4,10}\Big )\\
&=2\Big ( J_{4,10}J_{6,15}+qJ_{2,10}J_{3,15}\Big ).\qedhere
\end{align*}
\end{proof}

\section{tenth-order mock theta functions and Appell--Lerch functions} \label{section:hecke}
We recall the definition for Hecke-type double-sums:
\begin{definition}  Let $x,y\in\mathbb{C}^*$ and $a,\ b,\ c$ be non-negative integers, then 
\begin{equation}
\Big ( \sum_{r,s\ge 0}-\sum_{r,s<0} \Big )(-1)^{r+s}x^ry^sq^{a\binom{r}{2}+brs+c\binom{s}{2}}.
\end{equation}
\end{definition}

Taking the $n=2$, $p=1$ specialization of \cite[Theorem $1.3$]{HM}, we have
\begin{proposition} \label{proposition:f232} For generic $x,y,z\in \mathbb{C}^*$
{\allowdisplaybreaks \begin{align}
f_{2,3,2}(x,y,q)&=j(x;q^2)m\Big (\frac{q^6y^2}{x^3},q^{10},-1\Big )
-yj(q^{3}x;q^2)m\Big (\frac{qy^2}{x^3},q^{10},-1\Big )\\
&\ \ \ \ \ +j(y;q^2)m\Big (\frac{q^6x^2}{y^3},q^{10},-1\Big )
-xj(q^{3}y;q^2)m\Big (\frac{qx^2}{y^3},q^{10},-1\Big )\notag\\
&\ \ \ \ \ -\frac{1}{\overline{J}_{0,10}}
\cdot  \frac{y}{qx}\cdot \frac{J_{5}^3j(-x^2/y^2;q^{2})j(q^{3}xy;q^{5})}
 {j(-q^{4}y^{3}/x^2;q^5)j(-q^{4}x^3/y^2;q^{5})}.\notag
\end{align}}%
\end{proposition}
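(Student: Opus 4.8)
The plan is to read Proposition \ref{proposition:f232} off directly from the general expansion in \cite[Theorem $1.3$]{HM} by inserting the parameter values $n=2$, $p=1$, and then to massage the resulting theta functions into the normalized shapes displayed above. Since the double sum $f_{2,3,2}(x,y,q)$ is exactly the specialization $a=c=n=2$, $b=n+p=3$ of the left-hand side of that theorem, no work is required on the left; the entire content of the proposition lies in simplifying the right-hand side once the parameters are fixed. In particular this is a specialization-and-reduction argument, not one requiring a new idea.

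First I would write out the general right-hand side of \cite[Theorem $1.3$]{HM} and substitute $n=2$, $p=1$. The index set over which the theorem sums collapses to a short explicit list, and I expect it to produce exactly four Appell--Lerch terms together with a single theta-quotient correction term. Because $a=c$ here, the double sum is symmetric under $x\leftrightarrow y$; I would use this symmetry both as an organizing principle and as a consistency check, noting that the four $m$-terms must split into an $x$-led pair and a $y$-led pair that are images of one another under $x\leftrightarrow y$.

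Next I would normalize each Appell--Lerch factor. After the substitution the moduli come out as $q^{10}$ (note $D=b^2-ac=5$, so $2D=10$), while the third arguments generally appear as something other than $-1$; applying the functional equations (\ref{equation:mxqz-fnq-z})--(\ref{equation:mxqz-fnq-newz}) I would reduce every third argument to $-1$, absorbing the accompanying factors into the theta coefficients. Simultaneously I would use the elliptic transformation (\ref{equation:j-elliptic}) and the inversion rule (\ref{equation:j-inv}) to bring the prefactors to the displayed forms $j(x;q^2)$, $-y\,j(q^3x;q^2)$, $j(y;q^2)$, and $-x\,j(q^3y;q^2)$, and to rewrite the first arguments of the $m$-functions as $q^6y^2/x^3$, $qy^2/x^3$, $q^6x^2/y^3$, and $qx^2/y^3$. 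This step is essentially mechanical power-of-$q$ and sign bookkeeping.

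The hard part will be matching the single theta-quotient term exactly, including its prefactor $\tfrac{y}{qx}$ and the factor $\overline{J}_{0,10}^{-1}$. In \cite[Theorem $1.3$]{HM} this correction term is typically recorded in a longer, less-reduced form, so I would collapse it to the compact single quotient $J_5^3\, j(-x^2/y^2;q^2)\, j(q^3xy;q^5) / \big(j(-q^4y^3/x^2;q^5)\, j(-q^4x^3/y^2;q^5)\big)$ using the Weierstrass three-term relation (\ref{equation:Weierstrass}) together with the product identities (\ref{equation:H1Thm1.1})--(\ref{equation:H1Thm1.2B}), and I would verify the overall sign and $q$-power against the $x\leftrightarrow y$ symmetry. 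Once this correction term is confirmed, the proposition follows.
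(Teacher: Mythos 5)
Your approach is exactly the paper's: the proposition is stated there as nothing more than the $n=2$, $p=1$ specialization of \cite[Theorem $1.3$]{HM}, with the remaining work being the same mechanical normalization of the $m$-functions and the theta-quotient correction term that you describe. Your plan, including the use of the $x\leftrightarrow y$ symmetry as a check, is sound and matches the intended derivation.
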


Rewriting the respective Hecke-type double-sums from \cite{C1,C2}:
{\allowdisplaybreaks
\begin{align}
J_{1,2}\phi(q)&=f_{2,3,2}(q^2,q^2,q),\label{equation:phi-hecke}\\
J_{1,2}\psi(q)&=-q^2f_{2,3,2}(q^4,q^4,q),\label{equation:psi-hecke}\\
\overline{J}_{1,4}X(q)&=f_{2,3,2}(-q^3,-q^3,q^2),\label{equation:X-hecke}\\
\overline{J}_{1,4}(2-\chi(q))&=qf_{2,3,2}(-q^{-1},-q^{-1},q^2).\label{equation:chi-hecke}
\end{align}}%
\begin{corollary}\label{corollary:mxqz-forms} The following are true
{\allowdisplaybreaks \begin{align}
{\phi}(q)&=-q^{-1}m(q,q^{10},q)-q^{-1}m(q,q^{10},q^{2}),\label{equation:10th-phi(q)}\\
{\psi}(q)&=-m(q^3,q^{10},q)-m(q^3,q^{10},q^{3}),\label{equation:10th-psi(q)}\\
{X}(q)&=m(-q^2,q^{5},q)+m(-q^2,q^{5},q^{4}),\label{equation:10th-BigX(q)}\\
{\chi}(q)&=m(-q,q^{5},q^2)+m(-q,q^{5},q^{3}).\label{equation:10th-chi(q)}
\end{align}}%
\end{corollary}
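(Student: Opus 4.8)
The plan is to derive all four representations by substituting Choi's Hecke-type expansions (\ref{equation:phi-hecke})--(\ref{equation:chi-hecke}) into Proposition \ref{proposition:f232} and then normalizing the resulting Appell--Lerch functions with the functional equations (\ref{equation:mxqz-fnq-z})--(\ref{equation:changing-z}). In every case the first two arguments of $f_{2,3,2}$ coincide, say $x=y$, which collapses the four middle terms of Proposition \ref{proposition:f232} in pairs: terms one and three share the argument $q^6/x$ and terms two and four share $q/x$. For $\phi$ and $\psi$ the leading coefficient $j(x;q^2)$ specializes to $j(q^2;q^2)=0$ (resp. $j(q^4;q^2)=0$), so only one base-$q^{10}$ function survives; for $X$ and $\chi$ both coefficients persist, leaving two base-$q^{20}$ functions, namely $m(-q^9,q^{20},-1)$ and $m(-q^{-1},q^{20},-1)$ for $X$.

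For $\phi$ and $\psi$ I would first apply the inversion (\ref{equation:mxqz-flip}) to rewrite the surviving $m(q^{-1},q^{10},-1)$ as $q\,m(q,q^{10},-1)$ (and the analogue for $\psi$), and simplify the leading $\theta$-coefficient by the elliptic transformation (\ref{equation:j-elliptic}), e.g. $j(q^5;q^2)=q^{-4}J_{1,2}$, so that it cancels the normalizer $J_{1,2}$ and leaves exactly the constant $-2q^{-1}$ for $\phi$ (respectively $-2$ for $\psi$). To split this single function into the pair $m(q,q^{10},q)$ and $m(q,q^{10},q^2)$ demanded by (\ref{equation:10th-phi(q)}), I would invoke the change-of-$z$ formula (\ref{equation:changing-z}) twice, from $z_0=-1$ to $z_1=q$ and to $z_1=q^2$; the two $\theta$-quotients this produces must then be matched against the residual $\theta$-quotient in the last line of Proposition \ref{proposition:f232}.

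For $X$ and $\chi$ the base of $f_{2,3,2}$ is $q^2$, so the surviving functions sit at base $q^{20}$; these are precisely the two base-$q^{20}$ functions appearing in $D_2(-q^2,q^5,z,-1)$. I would therefore recombine them via Corollary \ref{corollary:msplitn2zprime} with parent base $q^5$ and $x=-q^2$: after checking the coefficient relation $j(-q^9;q^4)=q^{-6}j(-q^3;q^4)$ (from (\ref{equation:j-elliptic}) and (\ref{equation:j-inv})), the base-$q^{20}$ combination equals $m(-q^2,q^5,z)$ minus the explicit $\theta$-quotient (\ref{equation:msplit2}). A final application of (\ref{equation:changing-z}) then produces $m(-q^2,q^5,q)+m(-q^2,q^5,q^4)$ as in (\ref{equation:10th-BigX(q)}); the additive constant appearing in (\ref{equation:chi-hecke}) is tracked through the $\theta$-quotients and absorbed using $\overline{J}_{1,1}=2\overline{J}_{1,4}$, exactly as in the proof of Corollary \ref{corollary:cor-two-ids}.

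The substantive work, and the expected main obstacle, is entirely theta-functional: in each identity one must show that the sum of the $\theta$-quotients generated by (\ref{equation:changing-z}) (and, for $X$ and $\chi$, by (\ref{equation:msplit2})) coincides with the quotient left over from Proposition \ref{proposition:f232} once divided by the normalizer $J_{1,2}$ or $\overline{J}_{1,4}$. I would reduce each such identity to the Weierstrass three-term relation (Proposition \ref{proposition:Weierstrass-id}) together with the product rules (\ref{equation:j-elliptic})--(\ref{equation:j-mod-dec}) and the single-quotient evaluations of Corollary \ref{corollary:cor-two-ids}. Arranging these cancellations so that no spurious $\theta$-factor survives is where the bookkeeping is heaviest, and getting the $z$-splitting to align with the residual quotient is the delicate point.
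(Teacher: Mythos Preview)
Your proposal is correct and matches the paper's approach: the paper likewise substitutes the Hecke expansions into Proposition~\ref{proposition:f232}, uses (\ref{equation:changing-z}) to shift $z$ from $-1$ to $q,q^2$ (resp.\ $q,q^3$) for $\phi$ and $\psi$, and for $X,\chi$ recombines the base-$q^{20}$ terms via Corollary~\ref{corollary:msplitn2zprime} (packaged as the precomputed $D_2(-q^2,q^5,\cdot,-1)$ values in Lemma~\ref{lemma:f232-X}). The only differences in detail are that the paper's theta cancellations lean on (\ref{equation:H1Thm1.2B}) rather than directly on the Weierstrass relation, and Corollary~\ref{corollary:cor-two-ids} plays no role here.
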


We state a lemma:
\begin{lemma} \label{lemma:f232-X}We have
\begin{align}
D_2(-q^2,q^5,q,-1)&=q^{-2}\frac{J_{10}^3J_{5,10}\overline{J}_{12,20}}{\overline{J}_{2,5}\overline{J}_{0,20}J_{1,10}J_{4,10}},\\
D_2(-q^2,q^5,q^4,-1)&=q^{-2}\frac{J_{10}^{3}J_{5,10}J_{3,10}\overline{J}_{4,20}}{\overline{J}_{1,5}\overline{J}_{0,20}J_{1,10}^2J_{4,10}}.
\end{align}
\end{lemma}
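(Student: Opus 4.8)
The lemma asks us to evaluate two specializations of $D_2$, namely $D_2(-q^2,q^5,q,-1)$ and $D_2(-q^2,q^5,q^4,-1)$, in closed form as single quotients of theta functions. The natural approach is to substitute directly into the explicit formula for $D_2(x,q,z,z')$ supplied by Corollary~\ref{corollary:msplitn2zprime} and then simplify. Concretely, I would set $x=-q^2$, replace the base $q$ in (\ref{equation:msplit2}) by $q^5$ (so that $q^2\mapsto q^{10}$ and $q^4\mapsto q^{20}$ throughout), take $z'=-1$, and take $z=q$ for the first identity and $z=q^4$ for the second. This turns the bracketed expression into a difference of two terms, each a ratio of $j$-functions in bases $q^{10}$ and $q^{20}$; with $x^2=q^4$ the arguments $-q^i x^2 z z'$ and so on all collapse to pure powers of $q$, so every factor becomes one of the $J_{a,m}$ or $\overline{J}_{a,m}$ symbols.

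\textbf{Key steps, in order.}
First I would carefully record the substitution: in (\ref{equation:msplit2}) the outer factor $z'J_2^3/\big(j(xz;q)j(z';q^4)\big)$ becomes $-J_{10}^3/\big(j(xz;q^5)\,\overline{J}_{0,20}\big)$, using $j(-1;q^{20})=\overline{J}_{0,20}$. Next I would evaluate the two summand-ratios inside the bracket. Because $z'=-1$ forces factors of the form $j(-q^a;q^{10})=\overline{J}_{a,10}$ and $j(q^b;q^{20})=J_{b,20}$ (or their $\overline{J}$ analogues), each of the four $j$-factors in each term reduces to a named theta symbol; the prefactor $-xz=q^2z$ contributes an explicit power of $q$. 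The main simplification is then to combine the two bracketed terms over the common denominator $j(-qx^2z';q^2)\mapsto\overline{J}_{\,\cdot,10}$ and to reconcile the result with the stated right-hand sides.

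\textbf{The main obstacle.}
I expect the real work to be bookkeeping: reducing the shifted arguments to canonical residues modulo the relevant modulus and applying the elliptic transformation (\ref{equation:j-elliptic}) and the inversion (\ref{equation:j-inv}) to absorb the stray powers of $q$ and signs so that the two-term bracket genuinely collapses to a \emph{single} quotient. For instance, one expects that after substitution one term must be rewritten using $j(q^n x;q)=(-1)^nq^{-\binom n2}x^{-n}j(x;q)$ so that it merges with the other; getting the exponent of $q$ to match the claimed $q^{-2}$ prefactor, and confirming that the surviving theta factors are exactly $J_{5,10}\overline{J}_{12,20}$ over $\overline{J}_{2,5}J_{1,10}J_{4,10}$ (respectively $J_{5,10}J_{3,10}\overline{J}_{4,20}$ over $\overline{J}_{1,5}J_{1,10}^2J_{4,10}$), is where errors would creep in. A useful consistency check at the end is to verify the $q$-power and the first few $q$-expansion coefficients of both sides numerically; if those agree, the theta-symbol identification is almost certainly correct.
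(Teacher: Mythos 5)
Your starting point --- substituting $x=-q^2$, $z'=-1$, and $z=q$ or $z=q^4$ into Corollary \ref{corollary:msplitn2zprime} with base $q^5$ --- is exactly the paper's, but your description of ``the main obstacle'' misses the actual content of the proof, and the plan as written would stall on the second identity. Two points. First, for $D_2(-q^2,q^5,q,-1)$ there is nothing to combine: the factor $j(-q^5x^2zz';q^{10})$ in the first bracketed term becomes $j(q^{10};q^{10})=0$, so that term vanishes identically and the single surviving quotient is already the answer after the elementary reductions you describe. Second, and more seriously, for $D_2(-q^2,q^5,q^4,-1)$ both terms survive and are genuinely distinct theta quotients; no amount of applying (\ref{equation:j-elliptic}) and (\ref{equation:j-inv}) will make one ``merge with the other.'' After putting everything over a common denominator one is left with the sum
\begin{equation*}
J_{3,10}J_{1,10}\overline{J}_{8,20}+qJ_{2,10}J_{4,10}\overline{J}_{2,20},
\end{equation*}
and collapsing this to the single product $J_{5,10}J_{3,10}\overline{J}_{4,20}$ requires a theta \emph{addition} formula, not bookkeeping. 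The paper does this by first writing $\overline{J}_{8,20}=\tfrac{J_{20}}{J_{10}^2}\,j(iq^4;q^{10})j(-iq^4;q^{10})$ and $\overline{J}_{2,20}=\tfrac{J_{20}}{J_{10}^2}\,j(iq;q^{10})j(-iq;q^{10})$ via (\ref{equation:j-mod-dec}), and then invoking the Weierstrass three-term relation (\ref{equation:Weierstrass}) with $q\rightarrow q^{10}$, $a=q^4$, $b=q^2$, $c=q$, $d=i$. That choice of an imaginary parameter is the one nontrivial idea in the proof, and it is absent from your outline; a numerical check of the first few coefficients does not substitute for it. If you supply that step (or an equivalent two-term identity such as (\ref{equation:H1Thm1.1}) or (\ref{equation:H1Thm1.2B}) after a suitable rearrangement), the rest of your plan goes through.
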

\begin{proof} For the first identity, use Corollary \ref{corollary:msplitn2zprime}.  Note that one of the two theta quotients of 
(\ref{equation:msplit2}) vanishes. For the second identity, we use Corollary \ref{corollary:msplitn2zprime} to obtain
{\allowdisplaybreaks \begin{align*}
D_2&(-q^2,q^5,q^4,-1)\\
&= q^{-2}\frac{J_{10}^3J_{3,10}\overline{J}_{8,20}}{\overline{J}_{1,5}\overline{J}_{0,20}J_{1,10}J_{4,10}}
+q^{-1}\frac{J_{10}^3J_{2,10}\overline{J}_{2,20}}{\overline{J}_{1,5}\overline{J}_{0,20}J_{1,10}^2}\\
&=  q^{-2}\frac{J_{10}^3}{\overline{J}_{1,5}\overline{J}_{0,20}J_{1,10}^2J_{4,10}}
 \Big [J_{3,10}J_{1,10}\overline{J}_{8,20}+qJ_{2,10}J_{4,10}\overline{J}_{2,20} \Big]\\
 &=q^{-2}\frac{J_{10}^3}{\overline{J}_{1,5}\overline{J}_{0,20}J_{1,10}^2J_{4,10}} \frac{J_{20}}{J_{10}^2} 
 \Big [j(q^3;q^{10})j(q;q^{10})j(iq^{4};q^{10})j(-iq^{4};q^{10})\\
 &\ \ \ \ \ \ \ \ \ \ +qj(q^2;q^{10})j(q^4;q^{10})j(iq;q^{10})j(-iq;q^{10}) \Big]\\
&=q^{-2}\frac{J_{10}^3}{\overline{J}_{1,5}\overline{J}_{0,20}J_{1,10}^2J_{4,10}} \frac{J_{20}}{J_{10}^2}
 \Big [j(q^5;q^{10})j(q^3;q^{10})j(iq^2;q^{10})j(-iq^2;q^{10}) \Big],
\end{align*}}%
where in the last two equalities we have used (\ref{equation:j-mod-dec}) and then (\ref{equation:Weierstrass}) with $q\rightarrow q^{10}$ and $a=q^4$, $b=q^{2}$, $c=q$, $d=i$.  The result then follows from product rearrangements.
\end{proof}

\begin{proof}[Proof of Corollary \ref{corollary:mxqz-forms}] The proofs for (\ref{equation:10th-phi(q)}) and (\ref{equation:10th-psi(q)}) are similar, so we will only do the first identity.  Using Proposition \ref{proposition:f232} and Hecke sum identity (\ref{equation:phi-hecke}), we have 
{\allowdisplaybreaks \begin{align*}
f_{2,3,2}&(q^2,q^2,q)\\
&=-q^{-1}J_{1,2}m\Big (q,q^{10},-1\Big )
-q^{-1}J_{1,2}m\Big (q,q^{10},-1\Big )
 + \frac{q^{-1}J_{5}^3\overline{J}_{0,2}J_{2,5}}
 {\overline{J}_{0,10}\overline{J}_{1,5}^2}\\
&=-q^{-1}J_{1,2}m\Big (q,q^{10},q\Big )-q^{-1}J_{1,2}m\Big (q,q^{10},q^2\Big )&(\textup{by (\ref{equation:changing-z})})\\
&\ \ \ \ \ - \frac{q^{-1}J_{10}^3J_{1,2}\overline{J}_{2,10}}{\overline{J}_{0,10}J_{2,10}} 
 \Big [ \frac{1}{J_{1,10}}+ \frac{\overline{J}_{3,10}}{\overline{J}_{1,10}J_{3,10}}\Big ]
  + \frac{q^{-1}J_{5}^3\overline{J}_{0,2}J_{2,5}}
 {\overline{J}_{0,10}\overline{J}_{1,5}^2}\\
&=-q^{-1}J_{1,2}m\Big (q,q^{10},q\Big )-q^{-1}J_{1,2}m\Big (q,q^{10},q^2\Big )\\
&\ \ \ \ \ - \frac{q^{-1}J_{10}^3J_{1,2}\overline{J}_{2,10}}{\overline{J}_{0,10}J_{2,10}}
 \frac{\overline{J}_{1,10}J_{3,10}+ J_{1,10}\overline{J}_{3,10}}{J_{1,10}\overline{J}_{1,10}J_{3,10}}
   + \frac{q^{-1}J_{5}^3\overline{J}_{0,2}J_{2,5}}
 {\overline{J}_{0,10}\overline{J}_{1,5}^2}\\
&=-q^{-1}J_{1,2}m\Big (q,q^{10},q\Big )-q^{-1}J_{1,2}m\Big (q,q^{10},q^2\Big )&(\textup{by (\ref{equation:H1Thm1.2B})})\\
&\ \ \ \ \ - \frac{q^{-1}J_{10}^3J_{1,2}\overline{J}_{2,10}}{\overline{J}_{0,10}J_{2,10}}
  \frac{2J_{4,20}J_{12,20}}{J_{1,10}\overline{J}_{1,10}J_{3,10}}
  + \frac{q^{-1}J_{5}^3\overline{J}_{0,2}J_{2,5}}
{\overline{J}_{0,10}\overline{J}_{1,5}^2}\\ 
&=-q^{-1}j(q;q^2)m\Big (q,q^{10},q\Big )-q^{-1}j(q;q^2)m\Big (q,q^{10},q^2\Big ),
\end{align*}}%
where the last line follows by elementary product rearrangements.  The proofs for (\ref{equation:10th-BigX(q)}) and (\ref{equation:10th-chi(q)}) are similar, so we will only do the third identity.  Using Proposition \ref{proposition:f232}, the Hecke sum identity (\ref{equation:X-hecke}), and Lemma \ref{lemma:f232-X}, we have
{\allowdisplaybreaks \begin{align*}
&f_{2,3,2}(-q^3,-q^3,q^2)\\
&=\overline{J}_{1,4}m\Big (-q^{9},q^{20},-1\Big )
+q^{-3}\overline{J}_{1,4}m\Big (-q^{-1},q^{20},-1\Big )\\
&\ \ \ \ \ +\overline{J}_{1,4}m\Big (-q^{9},q^{20},-1\Big )
+q^{-3}\overline{J}_{1,4}m\Big (-q^{-1},q^{20},-1\Big )
 +  q^{-2} \frac{J_{10}^3\overline{J}_{0,4}J_{2,10}}
 {\overline{J}_{0,20}J_{1,10}^2}\\
&=\overline{J}_{1,4}m(-q^2,q^5,q)+\overline{J}_{1,4}m(-q^2,q^5,q^4) \\
&\ \ \ \ \ -q^{-2}\frac{J_{10}^3\overline{J}_{1,4}J_{5,10}}{\overline{J}_{0,20}J_{1,10}J_{4,10}}
\Big [ \frac{\overline{J}_{12,20}}{\overline{J}_{2,5}}+\frac{J_{3,10}\overline{J}_{4,20}}{\overline{J}_{1,5}J_{1,10}}\Big]
+  q^{-2}\frac{J_{10}^3\overline{J}_{0,4}J_{2,10}}{\overline{J}_{0,20}J_{1,10}^2}\\
&=\overline{J}_{1,4}m(-q^2,q^5,q)+\overline{J}_{1,4}m(-q^2,q^5,q^4) &(\textup{by (\ref{equation:j-mod-inc})})\\
&\ \ \ \ \ -q^{-2}\frac{J_{10}^3\overline{J}_{1,4}J_{5,10}}{\overline{J}_{0,20}J_{1,10}J_{4,10}}
\frac{J_{10}^2}{J_5} \Big [ \frac{\overline{J}_{12,20}}{\overline{J}_{2,10}\overline{J}_{3,10}}
+\frac{J_{3,10}\overline{J}_{4,20}}{\overline{J}_{1,10}\overline{J}_{6,10}J_{1,10}}\Big]
+  q^{-2}\frac{J_{10}^3\overline{J}_{0,4}J_{2,10}}{\overline{J}_{0,20}J_{1,10}^2}\\
&=\overline{J}_{1,4}m(-q^2,q^5,q)+\overline{J}_{1,4}m(-q^2,q^5,q^4) 
+  q^{-2}\frac{J_{10}^3\overline{J}_{0,4}J_{2,10}}{\overline{J}_{0,20}J_{1,10}^2}\\
&\ \ \ \ \ -q^{-2}\frac{J_{10}^3\overline{J}_{1,4}J_{5,10}}{\overline{J}_{0,20}J_{1,10}J_{4,10}}
\frac{J_{10}^2}{J_5} \Big [ \frac{\overline{J}_{12,20}\overline{J}_{1,10}\overline{J}_{6,10}J_{1,10}
+J_{3,10}\overline{J}_{2,10}\overline{J}_{3,10}\overline{J}_{4,20}}{\overline{J}_{2,10}\overline{J}_{3,10}\overline{J}_{1,10}\overline{J}_{6,10}J_{1,10}}\Big]\\
&=\overline{J}_{1,4}m(-q^2,q^5,q)+\overline{J}_{1,4}m(-q^2,q^5,q^4) 
+  q^{-2}\frac{J_{10}^3\overline{J}_{0,4}J_{2,10}}{\overline{J}_{0,20}J_{1,10}^2}&(\textup{by (\ref{equation:j-mod-dec})})\\
&\ \ \ \ \ -q^{-2}\frac{J_{10}^3\overline{J}_{1,4}J_{5,10}}{\overline{J}_{0,20}J_{1,10}J_{4,10}}
\frac{J_{10}^5}{J_5J_{20}^3}
\Big [ \frac{\overline{J}_{12,20}J_{2,20}\overline{J}_{6,20}\overline{J}_{16,20}
+\overline{J}_{2,20}\overline{J}_{12,20}J_{6,20}\overline{J}_{4,20}}{\overline{J}_{2,10}\overline{J}_{3,10}\overline{J}_{1,10}\overline{J}_{6,10}J_{1,10}}\Big]\\
&=\overline{J}_{1,4}m(-q^2,q^5,q)+\overline{J}_{1,4}m(-q^2,q^5,q^4) 
+  q^{-2}\frac{J_{10}^3\overline{J}_{0,4}J_{2,10}}{\overline{J}_{0,20}J_{1,10}^2}\\
&\ \ \ \ \ -q^{-2}\frac{J_{10}^3\overline{J}_{1,4}J_{5,10}}{\overline{J}_{0,20}J_{1,10}J_{4,10}}
\frac{J_{10}^5}{J_5J_{20}^3} 
 \frac{\overline{J}_{12,20}\overline{J}_{4,20}}{\overline{J}_{2,10}\overline{J}_{3,10}\overline{J}_{1,10}\overline{J}_{6,10}J_{1,10}} \Big [ J_{2,20}\overline{J}_{6,20}+\overline{J}_{2,20}J_{6,20}\Big]\\
&=\overline{J}_{1,4}m(-q^2,q^5,q)+\overline{J}_{1,4}m(-q^2,q^5,q^4) +  q^{-2}\frac{J_{10}^3\overline{J}_{0,4}J_{2,10}}{\overline{J}_{0,20}J_{1,10}^2}\\
&\ \ \ \ \ -q^{-2}\frac{J_{10}^3\overline{J}_{1,4}J_{5,10}}{\overline{J}_{0,20}J_{1,10}J_{4,10}}
\frac{J_{10}^5}{J_5J_{20}^3}
 \frac{\overline{J}_{12,20}\overline{J}_{4,20}}{\overline{J}_{2,10}\overline{J}_{3,10}\overline{J}_{1,10}\overline{J}_{6,10}J_{1,10}}\cdot 
 2J_{8,40}J_{24,40},&(\textup{by (\ref{equation:H1Thm1.2B})})
\end{align*}}%
and the result follows by elementary product rearrangements.
\end{proof}

\section{The six identities in terms of the $D_n(x,q,z,z')$ function}\label{section:Dn-forms}  We rewrite Ramanujan's six identities for the tenth-order mock theta functions.
\begin{lemma}  We have
\begin{align}
\psi(q)+q\phi(-q^4)+X(q^8)&=-D_2(q^3,q^{10},q^6,q^{-8})-D_2(q^3,q^{10},q^{4},q^{8}),\label{equation:id5-D2-expansion}\\
\phi(q)-q^{-1}\psi(-q^4)+q^{-2}\chi(q^8)&=-q^{-1}D_2(q,q^{10},q^8,q^{-24})-q^{-1}D_2(q,q^{10},q^2,q^{-16}).\label{equation:id6-D2-expansion}
\end{align}
\end{lemma}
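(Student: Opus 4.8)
The plan is to prove both identities by the same mechanism. I would rewrite every tenth-order function on the left in terms of Appell--Lerch functions via Corollary \ref{corollary:mxqz-forms} (rescaling $q$ appropriately in each), expand each $D_2$ on the right via its definition (\ref{equation:D2-def}), and then check that the two resulting collections of $m$-functions coincide term by term after applying the value-preserving symmetries (\ref{equation:mxqz-fnq-z}), (\ref{equation:mxqz-flip}), and (\ref{equation:mxqz-fnq-newz}). The pleasant surprise I expect is that the match is exact, so no theta-quotient manipulation via (\ref{equation:changing-z}) is needed.

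First I would carry out the substitutions on the left side of (\ref{equation:id5-D2-expansion}). Replacing $q$ by $-q^4$ in the formula for $\phi$ and by $q^8$ in the formula for $X$, and using $(-q^4)^{10}=q^{40}$ while $(-q^4)^3=-q^{12}$, yields
\begin{align*}
\psi(q)&=-m(q^3,q^{10},q)-m(q^3,q^{10},q^3),\\
q\phi(-q^4)&=q^{-3}m(-q^4,q^{40},-q^4)+q^{-3}m(-q^4,q^{40},q^8),\\
X(q^8)&=m(-q^{16},q^{40},q^8)+m(-q^{16},q^{40},q^{32}).
\end{align*}
On the right I would expand $-D_2(q^3,q^{10},q^6,q^{-8})-D_2(q^3,q^{10},q^4,q^8)$ using (\ref{equation:D2-def}) with base $q^{10}$, which produces $m(q^3,q^{10},\cdot)$, $m(-q^{16},q^{40},\cdot)$, and $q^{-7}m(-q^{-4},q^{40},\cdot)$ terms. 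The parallel substitutions for (\ref{equation:id6-D2-expansion}) (now $q\to-q^4$ in $\psi$ and $q\to q^8$ in $\chi$) generate bases $m(q,q^{10},\cdot)$, $m(-q^{12},q^{40},\cdot)$, and $m(-q^8,q^{40},\cdot)$, to be matched against a $D_2$ expansion with base $q^{10}$.

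The heart of the argument is reconciling the third arguments after grouping by the first two. For (\ref{equation:id5-D2-expansion}) the $m(q^3,q^{10},\cdot)$ terms agree because (\ref{equation:mxqz-fnq-newz}) followed by (\ref{equation:mxqz-fnq-z}) gives $m(q^3,q^{10},q^6)=m(q^3,q^{10},q)$ and $m(q^3,q^{10},q^4)=m(q^3,q^{10},q^3)$; the $m(-q^{16},q^{40},\cdot)$ terms agree because (\ref{equation:mxqz-fnq-z}) gives $m(-q^{16},q^{40},q^{32})=m(-q^{16},q^{40},q^{-8})$; and the $m(-q^4,q^{40},\cdot)$ terms agree once the $m(-q^{-4},q^{40},\cdot)$ pieces of the $D_2$ expansion are turned by the flip (\ref{equation:mxqz-flip}), for instance $q^{-7}m(-q^{-4},q^{40},q^{-8})=-q^{-3}m(-q^4,q^{40},q^8)$, combined with (\ref{equation:mxqz-fnq-newz}) applied to $m(-q^4,q^{40},-q^4)$. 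Identity (\ref{equation:id6-D2-expansion}) goes through identically, with (\ref{equation:mxqz-flip}) again converting the $m(-q^{-8},q^{40},\cdot)$ pieces into matching $m(-q^8,q^{40},\cdot)$ pieces.

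The main obstacle is purely bookkeeping rather than conceptual: one must track signs and exponents through the rescalings $q\mapsto-q^4$ and $q\mapsto q^8$, and then, for each of the six $m$-functions on either side, choose the correct composition of (\ref{equation:mxqz-fnq-z}), (\ref{equation:mxqz-flip}), and (\ref{equation:mxqz-fnq-newz}) that aligns the third argument. The genuinely error-prone point is the sign behaviour under $q\mapsto-q^4$, where odd powers flip sign but even powers do not (so the base becomes $q^{40}$ while the shift in $\psi$ becomes $-q^{12}$); getting these signs right is precisely what makes the flip (\ref{equation:mxqz-flip}) land on the exact term required for the term-by-term cancellation.
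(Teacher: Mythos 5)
Your proposal is correct and follows essentially the same route as the paper: expand the left side via Corollary \ref{corollary:mxqz-forms} with the rescalings $q\mapsto -q^4$ and $q\mapsto q^8$, expand each $D_2$ via (\ref{equation:D2-def}), and match the six $m$-functions term by term using (\ref{equation:mxqz-fnq-z}), (\ref{equation:mxqz-flip}), and (\ref{equation:mxqz-fnq-newz}); the specific conversions you list (e.g.\ $m(q^3,q^{10},q)=m(q^3,q^{10},q^6)$ and $q^{-7}m(-q^{-4},q^{40},q^{-8})=-q^{-3}m(-q^4,q^{40},q^8)$) are exactly the ones the paper uses, and your expectation that no appeal to (\ref{equation:changing-z}) is needed is borne out.
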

\begin{proof}   The proofs for (\ref{equation:id5-D2-expansion}) and  (\ref{equation:id6-D2-expansion}) are similar, so we will only do the first.  Using (\ref{equation:10th-phi(q)}),  (\ref{equation:10th-psi(q)}),  and (\ref{equation:10th-BigX(q)}), we have
{\allowdisplaybreaks \begin{align*}
\psi(q)&+q\phi(-q^4)+X(q^8)\\
&\ =-m(q^3,q^{10},q)-m(q^3,q^{10},q^{3})+ q^{-3}m(-q^4,q^{40},-q^4)+q^{-3}m(-q^4,q^{40},q^{8})\\
&\ \ \ \ \ +m(-q^{16},q^{40},q^8)+m(-q^{16},q^{40},q^{32}),\\
&\ =-m(q^3,q^{10},q^6)-m(q^3,q^{10},q^{4})-q^{-7}m(-q^{-4},q^{40},q^{8})-q^{-7}m(-q^{-4},q^{40},q^{-8})\\
&\ \ \ \ \ +m(-q^{16},q^{40},q^8)+m(-q^{16},q^{40},q^{-8}),
\end{align*}}%
where we have used (\ref{equation:mxqz-fnq-newz}), (\ref{equation:mxqz-fnq-z}), (\ref{equation:mxqz-flip}).  The result then follows from (\ref{equation:D2-def}).
\end{proof}

\begin{lemma}  We have
\begin{align}
q^{2}\phi(q^9)&-\frac{\psi(\omega q)-\psi(\omega^2 q)}{\omega - \omega^2}\label{equation:id1-D3-expansion}\\
&\ =\frac{1}{\omega-\omega^2}\Big [ D_{3}(q^3,\omega q^{10},q^{3},q^{9})- D_{3}(q^3,\omega^2 q^{10},q^{3},q^{9})\notag\\
&\ \ \ \ \ \ \ \ \ \ \ \ \ \ \  +D_3(q^3,\omega q^{10},q^6,q^{18})-D_3(q^3,\omega^2 q^{10},q^6,q^{18}) \Big ],\notag\\
q^{-2}\psi(q^9)&+\frac{\omega \phi(\omega q)-\omega^2\phi(\omega^2 q)}{\omega - \omega^2}\label{equation:id2-D3-expansion}\\
&\  =-\frac{q^{-1}}{\omega-\omega^2}\Big [ D_3(\omega q, \omega q^{10}, q^{-3},q^{-9})
 - D_3(\omega^2 q, \omega^2 q^{10}, q^{-3},q^{-9})\notag\\
&\ \ \ \ \ \ \ \ \ \ \ \ \ \ \ +D_3(\omega q, \omega q^{10}, q^{-9},q^{-27})- D_3(\omega^2 q, \omega^2 q^{10}, q^{-9},q^{-27})\Big ].\notag
\end{align}
\end{lemma}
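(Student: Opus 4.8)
The plan is to mimic the proof of the analogous $D_2$-lemma (\ref{equation:id5-D2-expansion})--(\ref{equation:id6-D2-expansion}): I would read the right-hand sides through the definition (\ref{equation:D3-def}) and match them against the Appell--Lerch expansions (\ref{equation:10th-phi(q)}) and (\ref{equation:10th-psi(q)}). For the first identity I would first substitute $q\mapsto q^9$ in (\ref{equation:10th-phi(q)}) to obtain $q^{2}\phi(q^9)=-q^{-7}m(q^9,q^{90},q^9)-q^{-7}m(q^9,q^{90},q^{18})$, and substitute $q\mapsto\omega q$ and $q\mapsto\omega^2 q$ in (\ref{equation:10th-psi(q)}), using $\omega^3=1$ to collapse the resulting bases and arguments, e.g.\ $\psi(\omega q)=-m(q^3,\omega q^{10},\omega q)-m(q^3,\omega q^{10},q^3)$. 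The prefactor $1/(\omega-\omega^2)$ signals that only the genuinely root-of-unity--dependent parts of each $D_3$ should survive.

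Next I would normalise the $z$-arguments using the functional equations (\ref{equation:mxqz-fnq-z}), (\ref{equation:mxqz-flip}), and (\ref{equation:mxqz-fnq-newz}). The crucial identification for the first identity is $m(q^3,\omega q^{10},\omega q)=m(q^3,\omega q^{10},q^6)$, obtained by applying (\ref{equation:mxqz-fnq-newz}) (sending $z\mapsto x^{-1}z^{-1}=\omega^2q^{-4}$) followed by one base shift (\ref{equation:mxqz-fnq-z}); this puts the $\psi$-terms into exactly the arguments $q^3,q^6$ appearing in the claimed $D_3(q^3,\omega q^{10},q^3,q^9)$ and $D_3(q^3,\omega q^{10},q^6,q^{18})$, and likewise for $\omega^2$.

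I would then expand each $D_3$ on the right via (\ref{equation:D3-def}) with $(x,Q)=(q^3,\omega q^{10})$ and $(q^3,\omega^2 q^{10})$. Since $Q^3=(\omega^{\pm1}q^{10})^3=q^{30}$ and $Q^9=q^{90}$ are independent of the chosen cube root, the $m(q^{39},q^{90},z')$ and $m(q^{-21},q^{90},z')$ terms cancel in the antisymmetric difference $D_3(q^3,\omega q^{10},\cdot,\cdot)-D_3(q^3,\omega^2 q^{10},\cdot,\cdot)$. The term $Q^{-1}x\,m(x^3,Q^9,z')=Q^{-1}x\,m(q^9,q^{90},z')$ has coefficient $\omega^2q^{-7}$ when $Q=\omega q^{10}$ and $\omega q^{-7}$ when $Q=\omega^2 q^{10}$, so it contributes $(\omega^2-\omega)q^{-7}m(q^9,q^{90},z')$ to the difference, i.e.\ $-q^{-7}m(q^9,q^{90},z')$ after dividing by $\omega-\omega^2$; taking $z'=q^9$ and $z'=q^{18}$ reproduces $q^{2}\phi(q^9)$. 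The leading $m(q^3,\omega^{\pm1}q^{10},z)$ terms, divided by $\omega-\omega^2$, reproduce $-(\psi(\omega q)-\psi(\omega^2 q))/(\omega-\omega^2)$, completing (\ref{equation:id1-D3-expansion}). The second identity (\ref{equation:id2-D3-expansion}) is handled symmetrically: substitute $q\mapsto q^9$ in (\ref{equation:10th-psi(q)}) and $q\mapsto\omega^{\pm1}q$ in (\ref{equation:10th-phi(q)}), expand $D_3$ with $x=\omega^{\pm1}q$ and base $\omega^{\pm1}q^{10}$, and use the flip (\ref{equation:mxqz-flip}) to turn the surviving $m(q^{-27},q^{90},z')$ term into $q^{27}m(q^{27},q^{90},z'^{-1})$, matching $q^{-2}\psi(q^9)$ upon taking $z'=q^{-9},q^{-27}$; here one checks $m(\omega q,\omega q^{10},\omega q)=m(\omega q,\omega q^{10},q^{-9})$ by (\ref{equation:mxqz-fnq-z}) and $m(\omega q,\omega q^{10},\omega^2 q^2)=m(\omega q,\omega q^{10},q^{-3})$ by (\ref{equation:mxqz-fnq-newz}), so that the leading terms recombine into $(\omega\phi(\omega q)-\omega^2\phi(\omega^2 q))/(\omega-\omega^2)$.

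I expect the main obstacle to be purely bookkeeping rather than conceptual: correctly propagating the powers of $\omega$ (via $\omega^3=1$ and $\omega^{-1}=\omega^2$) and of $q$ through every substitution and every application of (\ref{equation:mxqz-fnq-z})--(\ref{equation:mxqz-fnq-newz}), verifying that the various $z$- and $z'$-arguments which superficially differ in fact coincide, and confirming that precisely the root-of-unity--dependent Appell--Lerch terms survive the antisymmetrisation while all the others cancel.
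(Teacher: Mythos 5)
Your proposal is correct and follows essentially the same route as the paper: rewrite both sides via the Appell--Lerch expansions (\ref{equation:10th-phi(q)})--(\ref{equation:10th-psi(q)}), normalize the $z$-arguments with (\ref{equation:mxqz-fnq-z}), (\ref{equation:mxqz-flip}), (\ref{equation:mxqz-fnq-newz}) (in particular $m(q^3,\omega q^{10},\omega q)=m(q^3,\omega q^{10},q^6)$), and match against the expansion of $D_3$ from (\ref{equation:D3-def}), with the $\omega$-independent terms cancelling in the antisymmetrized differences. Your bookkeeping of the surviving coefficients ($\omega^2 q^{-7}$ versus $\omega q^{-7}$, etc.) checks out, so the argument is sound.
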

\begin{proof} 
Rewriting identity (\ref{equation:tenth-id-1}) with expansions (\ref{equation:10th-phi(q)}) and (\ref{equation:10th-psi(q)}) gives
{\allowdisplaybreaks \begin{align*}
&q^{2}\phi(q^9)-\frac{\psi(\omega q)-\psi(\omega^2 q)}{\omega - \omega^2}\\
&\ = -q^{-7}m(q^9,q^{90},q^9)-q^{-7}m(q^9,q^{90},q^{18})\\
&\ \ \ \ +\frac{1}{\omega-\omega^2}\Big [ m(q^{3},\omega q^{10},\omega q) +m(q^{3},\omega q^{10},q^{3}) -m(q^{3},\omega^2 q^{10},\omega^2q) -m(q^{3},\omega^2 q^{10},q^{3})\Big ]\\
&\ = -q^{-7}m(q^9,q^{90},q^9)-q^{-7}m(q^9,q^{90},q^{18})\\
&\ \ \ \ +\frac{1}{\omega-\omega^2}\Big [ m(q^{3},\omega q^{10}, q^6)
+m(q^{3},\omega q^{10},q^{3})-m(q^{3},\omega^2 q^{10},q^6)-m(q^{3},\omega^2 q^{10},q^{3})\Big ],
\end{align*}}%
where we have used (\ref{equation:mxqz-fnq-z}) and (\ref{equation:mxqz-fnq-newz}).  The result then follows from (\ref{equation:D3-def}).  The argument for (\ref{equation:id2-D3-expansion}) is similar but uses (\ref{equation:mxqz-flip}), (\ref{equation:mxqz-fnq-newz}), and (\ref{equation:mxqz-fnq-z}).
\end{proof}

\begin{lemma}  We have
 \begin{align}
X(q^9)&-\frac{\omega\chi(\omega q)-\omega^2 \chi(\omega^2 q)}{\omega-\omega^2 }\label{equation:id3-D3-expansion}\\
&\ =-\frac{1}{1-\omega}\Big [ D_3(-\omega q, \omega^2 q^5,-q^{-3},-q^{-9} )-\omega D_3(-\omega^2 q, \omega q^5,-q^{-3},-q^{-9} )\notag\\
&\ \ \ \ \ \ \ \ \ \ \ \ \ \ \ +D_3(-\omega q, \omega^2 q^5,q^{3},q^{9} )-\omega D_3(-\omega^2 q, \omega q^5,q^{3},q^{9} )\Big],\notag \\
\chi(q^9)&+q^2\frac{X(\omega q)- X(\omega^2 q)}{\omega-\omega^2 }\label{equation:id4-D3-expansion}\\
&\ =\frac{q^2}{\omega-\omega^2}\Big [
D_{3}(-\omega^2q^2,\omega ^2 q^5,q^6,q^{18})-D_{3}(-\omega q^2,\omega q^5,q^6,q^{18})\notag\\
&\ \ \ \ \ \ \ \ \ \ \ \ \ \ \ +D_{3}(-\omega^2 q^2,\omega^2 q^5,q^9,q^{27})-D_{3}(-\omega q^2,\omega q^5,q^9,q^{27})\Big ].\notag
\end{align}
\end{lemma}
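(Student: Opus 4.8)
The plan is to argue exactly as in the proof of the preceding lemma, substituting the Appell--Lerch expansions (\ref{equation:10th-BigX(q)}) and (\ref{equation:10th-chi(q)}) for $X$ and $\chi$ into the left-hand sides of (\ref{equation:tenth-id-3}) and (\ref{equation:tenth-id-4}). The two identities are handled in the same way, so I would present (\ref{equation:id3-D3-expansion}) in full and only indicate the modifications needed for (\ref{equation:id4-D3-expansion}).

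First I would expand and collapse all roots of unity using $\omega^3=1$. Expanding $X(q^9)$ by (\ref{equation:10th-BigX(q)}) produces two copies of $m(-q^{18},q^{45},\cdot)$, while $\chi(\omega q)$ and $\chi(\omega^2 q)$ expanded by (\ref{equation:10th-chi(q)}) produce Appell--Lerch functions whose $q$-base collapses to $\omega^2 q^5$ and $\omega q^5$, respectively; these are precisely the two bases appearing in the four $D_3$'s of (\ref{equation:id3-D3-expansion}). A convenient preliminary is the scalar rearrangement
\[
\frac{\omega\chi(\omega q)-\omega^2\chi(\omega^2 q)}{\omega-\omega^2}=\frac{1}{1-\omega}\bigl[\chi(\omega q)-\omega\chi(\omega^2 q)\bigr],
\]
which accounts both for the prefactor $1/(1-\omega)$ and for the asymmetric coefficient $-\omega$ in (\ref{equation:id3-D3-expansion}); the analogous step for (\ref{equation:id4-D3-expansion}) is immediate and preserves the symmetric prefactor $q^2/(\omega-\omega^2)$. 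I would then apply (\ref{equation:mxqz-fnq-newz}) and (\ref{equation:mxqz-fnq-z}) to normalize the $z$-arguments of the $\chi$-terms --- for instance $m(-\omega q,\omega^2 q^5,\omega^2 q^2)=m(-\omega q,\omega^2 q^5,-q^{-3})$ --- and of the $m(-q^{18},q^{45},\cdot)$ terms, so that the surviving arguments match the slots $\{-q^{-3},q^3\}$ and $\{-q^{-9},q^9\}$ demanded by the right-hand side.

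The identification with $D_3$ is then read off from (\ref{equation:D3-def}) by pairing the two $D_3$'s that share a given $(z,z')$. The key structural point is that, within such a pair, the three ``interior'' Appell--Lerch terms of (\ref{equation:D3-def}) (those of base $q^{45}$ and arguments $-q^{18},-q^3,-q^{-12}$) are \emph{identical} across the two $D_3$'s, because the $\omega$-dependence cancels out of their arguments; only the scalar prefactors differ. The prescribed linear combination then annihilates two of these three interior terms and retains the third, which reproduces one copy of the outer function $X(q^9)$, while the two leading terms reproduce the normalized $\chi(\omega q)$ and $\chi(\omega^2 q)$ contributions. For (\ref{equation:id4-D3-expansion}) the same cancellation occurs, but the surviving interior term appears as $m(-q^{-9},q^{45},\cdot)$ and must first be converted to the $\chi(q^9)$ form $m(-q^9,q^{45},\cdot)$ by the inversion (\ref{equation:mxqz-flip}).

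The main obstacle is the root-of-unity bookkeeping, which is genuinely more delicate than in (\ref{equation:id1-D3-expansion})--(\ref{equation:id2-D3-expansion}): since $\chi(\omega q)$ and $\chi(\omega^2 q)$ carry \emph{distinct} bases $\omega^2 q^5$ and $\omega q^5$, the two $D_3$'s in a pair differ in both their base and their first argument, and the coefficient $-\omega$ (rather than $-1$) must be tracked faithfully through every application of (\ref{equation:mxqz-flip}), (\ref{equation:mxqz-fnq-newz}) and (\ref{equation:mxqz-fnq-z}) for the interior terms to cancel and the prefactor $1/(1-\omega)$ to emerge correctly. Verifying that the two values of $z'$ are generated consistently from the two surviving $z$-values of the outer function, and that no stray powers of $\omega$ survive, is where the care lies; once the eight arguments of the four $D_3$'s are matched, the conclusion is immediate from (\ref{equation:D3-def}).
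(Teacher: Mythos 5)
Your proposal matches the paper's proof: substitute the Appell--Lerch expansions (\ref{equation:10th-BigX(q)}) and (\ref{equation:10th-chi(q)}), normalize the $z$-arguments with (\ref{equation:mxqz-fnq-z}) and (\ref{equation:mxqz-fnq-newz}), and read off the $D_3$ combinations from (\ref{equation:D3-def}); your cancellation analysis of the interior terms is exactly the step the paper leaves implicit in ``the result then follows from (\ref{equation:D3-def})''. Your further observation that (\ref{equation:id4-D3-expansion}) requires the inversion (\ref{equation:mxqz-flip}) to convert the surviving $m(-q^{-9},q^{45},\cdot)$ terms into the $m(-q^{9},q^{45},\cdot)$ terms of $\chi(q^9)$ is correct, and indeed more precise than the paper's closing remark, which cites only (\ref{equation:mxqz-fnq-z}).
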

\begin{proof} Rewriting identity (\ref{equation:tenth-id-3}) with expansions (\ref{equation:10th-BigX(q)}) and (\ref{equation:10th-chi(q)}) gives
{\allowdisplaybreaks \begin{align*}
X&(q^9)-\frac{\omega \chi(\omega q)-\omega^2 \chi(\omega^2 q)}{\omega-\omega^2 }\\
&=m(-q^{18},q^{45},q^9)+m(-q^{18},q^{45},q^{36})
 -\frac{1}{1-\omega}\Big [m(-\omega q,\omega^2q^5,\omega^2q^2)\\
&\ \ \ \ \ +m(-\omega q,\omega^2q^5,q^3)
 -\omega m(-\omega^2 q,\omega q^5,\omega q^2)-\omega m(-\omega^2 q,\omega q^5,q^3)\Big ]\\
&=m(-q^{18},q^{45},q^9)+m(-q^{18},q^{45},-q^{-9}) -\frac{1}{1-\omega}\Big [m(-\omega q,\omega^2q^5,-q^{-3})\\
&\ \ \ \ \ +m(-\omega q,\omega^2q^5,q^3)
 -\omega m(-\omega^2 q,\omega q^5,-q^{-3})-\omega m(-\omega^2 q,\omega q^5,q^3)\Big ],
\end{align*}}%
where we have used (\ref{equation:mxqz-fnq-z}) and (\ref{equation:mxqz-fnq-newz}).  The result then follows from (\ref{equation:D3-def}).  The proof of identity (\ref{equation:id4-D3-expansion}) is similar but uses (\ref{equation:mxqz-fnq-z}).
\end{proof}

\section{Specializations of the $D_n(x,q,z,z')$ function}\label{section:single-quotients}
We have the following technical lemmas:
\begin{lemma} \label{lemma:msplit} We have
\begin{align}
D_2(q^3,q^{10},q^6,q^{-8})&=-\frac{J_{20}^3\overline{J}_{14,20}J_{20,40}}{J_{1,10}J_{8,40}\overline{J}_{8,20}J_{6,20}},\\
D_2(q^3,q^{10},q^{4},q^{8})&= -q\cdot \frac{J_{20}^3\overline{J}_{18,20}J_{20,40}}{J_{7,10}J_{8,40}\overline{J}_{4,20}J_{6,20}}.
\end{align}
\end{lemma}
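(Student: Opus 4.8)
The plan is to evaluate each of the two specializations by direct substitution into the $n=2$ formula of Corollary \ref{corollary:msplitn2zprime}, then collapse the resulting two-term bracket into a single theta quotient. For $D_2(q^3,q^{10},q^6,q^{-8})$, I would set $x=q^3$, $q\to q^{10}$, $z=q^6$, $z'=q^{-8}$ in (\ref{equation:msplit2}). The prefactor becomes $z'J_2^3/\bigl(j(xz;q)j(z';q^4)\bigr)$, which under these substitutions is $q^{-8}J_{20}^3/\bigl(j(q^9;q^{10})j(q^{-8};q^{40})\bigr)$, and I expect the bracketed expression to have exactly one of its two summands survive after the arguments are reduced modulo the relevant quasi-periods. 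The key device is the elliptic transformation (\ref{equation:j-elliptic}), $j(q^n x;q)=(-1)^nq^{-\binom{n}{2}}x^{-n}j(x;q)$, applied with $q\to q^{10}$ (and $q\to q^{40}$ for the $z'$-dependent factor) to each $j(\cdots)$ appearing in the formula; this both rewrites the nine- and eight-type arguments into the $J_{a,m}$ and $\overline{J}_{a,m}$ normal forms of the claimed answer and produces the overall power of $q$.

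For the second identity $D_2(q^3,q^{10},q^4,q^8)$, the substitution is $x=q^3$, $z=q^4$, $z'=q^8$, and here I would not expect a term to vanish outright. Instead, after using (\ref{equation:j-elliptic}) to normalize every theta factor, I anticipate a genuine two-term sum of theta quotients that must be combined. The natural tool is the Weierstrass three-term relation (\ref{equation:Weierstrass}) of Proposition \ref{proposition:Weierstrass-id}, exactly as it was deployed in the proof of Lemma \ref{lemma:f232-X}: one chooses $a,b,c,d$ so that the two surviving products match two of the three terms of (\ref{equation:Weierstrass}), thereby collapsing them into a single product. Alternatively, the addition identity (\ref{equation:H1Thm1.2B}), $j(-x;q)j(y;q)+j(x;q)j(-y;q)=2j(xy;q^2)j(qx^{-1}y;q^2)$, may be the cleaner instrument if the two terms carry opposite signs on a shared base; the factor-of-$2$ and the appearance of $J_{20,40}$ and $J_{8,40}$ in the target suggest a doubling identity of exactly this shape is in play.

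The main obstacle is bookkeeping: correctly tracking the signs and the accumulated powers of $q$ through repeated applications of (\ref{equation:j-elliptic}) across two different bases ($q^{10}$ from the $z$-arguments and $q^{40}$ from the $z'$-argument), and then identifying the right four-parameter specialization of (\ref{equation:Weierstrass}) (or the right instance of (\ref{equation:H1Thm1.2B})) that merges the bracket into the single quotient on the right-hand side. A secondary subtlety is verifying that the $x z$ coefficient $-xz=-q^7$ in the second summand of (\ref{equation:msplit2}) combines correctly with the elliptic normalization to yield the leading $-q$ in the answer to the second identity and the overall minus sign in the first. Once the surviving terms are written in the $J$/$\overline{J}$ notation, the final step is a routine product rearrangement, checking that each subscript and modulus matches the stated right-hand side; I would confirm this by comparing the exponent of $q$ and the individual theta factors term by term rather than expanding the infinite products.
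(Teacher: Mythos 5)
Your approach coincides with the paper's: the proof there is nothing more than direct substitution into Corollary \ref{corollary:msplitn2zprime} followed by normalization of each theta factor via (\ref{equation:j-elliptic}) and a product rearrangement. One correction to your plan for the second identity: no two-term combination is needed there either, because with $z=q^4$, $z'=q^8$ the factor $j(z^2/z';q^{40})=j(1;q^{40})=0$ annihilates the \emph{first} summand of the bracket in (\ref{equation:msplit2}), just as $j(q^{20}z^2/z';q^{40})=j(q^{40};q^{40})=0$ annihilates the second summand in the first identity; so neither (\ref{equation:Weierstrass}) nor (\ref{equation:H1Thm1.2B}) is required, and the surviving single term (with its coefficient $-xz=-q^7$ and the elliptic shifts $j(-q^{38};q^{20})=q^{-18}\overline{J}_{18,20}$, $j(-q^{24};q^{20})=q^{-4}\overline{J}_{4,20}$) already yields the stated $-q$ prefactor.
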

\begin{proof}  For each identity, use Corollary \ref{corollary:msplitn2zprime}.
\end{proof}

\begin{lemma} \label{lemma:msplit-fifth} We have
{\allowdisplaybreaks \begin{align}
D_2(q,q^{10},q^8,q^{-24})&=-q\cdot \frac{J_{20}^3\overline{J}_{6,20}J_{20,40}}{J_{9,10}J_{24,40}\overline{J}_{12,20}J_{18,20}},\\
D_2(q,q^{10},q^2,q^{-16})&=-q^2\cdot \frac{J_{20}^3\overline{J}_{2,20}J_{20,40}}{J_{3,10}J_{16,40}\overline{J}_{4,20}J_{2,20}}.
\end{align}}%
\end{lemma}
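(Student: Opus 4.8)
The plan is to mirror the one-line strategy used for the preceding Lemma \ref{lemma:msplit}: apply the $n=2$ specialization of Corollary \ref{corollary:msplitn2zprime}, i.e. formula (\ref{equation:msplit2}), directly to each of the two $D_2$-evaluations, and then reduce the resulting product of theta functions to the claimed single quotient. For the first identity I would set $x=q$, replace the generic base $q$ of (\ref{equation:msplit2}) by $q^{10}$, and take $z=q^8$, $z'=q^{-24}$; for the second I would instead take $z=q^2$, $z'=q^{-16}$. Under these substitutions the two theta quotients inside the bracket of (\ref{equation:msplit2}) become explicit, and the work is entirely in simplifying them.

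The key observation — and the reason the right-hand sides are single quotients rather than sums of two terms — is that in each case exactly one of the two bracketed quotients vanishes. Concretely, for $D_2(q,q^{10},q^8,q^{-24})$ the factor $j(z^2/z';q^{40})$ becomes $j(q^{40};q^{40})=0$, so the first quotient drops out; for $D_2(q,q^{10},q^2,q^{-16})$ the factor $j(q^{20}z^2/z';q^{40})$ becomes $j(q^{40};q^{40})=0$, so the second quotient drops out. (Recall that $j(x;q)=0$ whenever $x$ is an integral power of $q$; here $q^{40}/q^{40}=1$ forces a vanishing factor $1-q^0$ in the product.) Hence in each case only a single term of (\ref{equation:msplit2}) survives.

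It then remains to bring every surviving theta function to a standard index and to collect the scalar prefactor $z'J_{20}^3/\big(j(xz;q^{10})j(z';q^{40})\big)$. I would apply the elliptic transformation (\ref{equation:j-elliptic}) to rewrite the out-of-range factors — for the first identity, for instance, $j(q^{60};q^{40})=-q^{-20}J_{20,40}$, $j(q^{-24};q^{40})=-q^{-24}J_{24,40}$, and $j(-q^{-12};q^{20})=q^{-12}\overline{J}_{8,20}$ — followed by the inversion symmetry (\ref{equation:j-inv}), e.g. $\overline{J}_{8,20}=\overline{J}_{12,20}$, to match the indices on the right. Tracking the accumulated powers of $q$ and the signs then yields precisely $-q$ (resp. $-q^2$) times the stated quotient. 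The only real obstacle is this bookkeeping: correctly normalizing each theta function and verifying that the powers of $q$ and signs combine to the clean prefactors. There is no conceptual difficulty once the vanishing of one quotient is noted, so the proof reduces, exactly as in Lemma \ref{lemma:msplit}, to a direct application of Corollary \ref{corollary:msplitn2zprime} followed by product rearrangements.
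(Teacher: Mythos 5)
Your proposal is correct and is exactly the paper's approach: the paper's proof of this lemma is the one-line instruction to apply Corollary \ref{corollary:msplitn2zprime}, with the vanishing of one theta quotient (via $j(q^{40};q^{40})=0$) and the subsequent normalizations left implicit. Your identification of which term survives in each case and your sample computations (e.g.\ $j(q^{-24};q^{40})=-q^{-24}J_{24,40}$, $j(-q^{-12};q^{20})=q^{-12}\overline{J}_{8,20}=q^{-12}\overline{J}_{12,20}$) all check out and yield the stated prefactors $-q$ and $-q^2$.
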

\begin{proof}  For each identity, use Corollary \ref{corollary:msplitn2zprime}.
\end{proof}

\begin{lemma} \label{lemma:tenth-id-1} We have
{\allowdisplaybreaks \begin{align}
D_{3}(q^3,q^{10},q^{3},q^{9})&=-q^{-3}\cdot \frac{J_{30}^7J_{12,30}}{J_{6,30}J_{9,30}J_{9,90}J_{18,30}}
\cdot \frac{1}
{J_{5,30}J_{7,30}J_{13,30}},\\
D_3(q^3,q^{10},q^6,q^{18})&=-q^{-3}\cdot 
\frac{J_{30}^7J_{12,30}}{J_{6,30}J_{9,30}J_{18,90}J_{27,30}}
\cdot 
\frac{1}{J_{4,30}J_{5,30}J_{14,30}}.
\end{align}}%
\end{lemma}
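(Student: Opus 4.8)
The plan is to feed each specialization into the explicit three-term evaluation of Corollary~\ref{corollary:msplitn3zprime} and then fuse the surviving terms into one quotient. For the first identity I take $x=q^{3}$, $q\mapsto q^{10}$, $z=q^{3}$, $z'=q^{9}$ in (\ref{equation:msplit3}), and for the second $x=q^{3}$, $q\mapsto q^{10}$, $z=q^{6}$, $z'=q^{18}$. In both cases $z'=z^{3}$, so the factor $j(z^{3}/z';q^{90})=j(1;q^{90})=0$ kills the first of the three bracketed terms (note that with base $q^{10}$ the prefactor's $J_{3}^{3}$ becomes $J_{30}^{3}$), leaving only the second and third terms.

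I would then reduce those two terms to standard quotients: each inner theta $j(q^{31};q^{30})$, $j(q^{41};q^{30}),\dots$ collapses to some $J_{a,30}$ by the elliptic transformation (\ref{equation:j-elliptic}), while $j(q^{30};q^{90})$ and $j(q^{60};q^{90})$ both reduce to $J_{30}$ by (\ref{equation:j-inv}). For the first identity the bracket becomes $q^{-8}J_{30}\bigl(J_{1,30}/J_{13,30}-q^{-4}J_{11,30}/J_{7,30}\bigr)$, whose numerator over the common denominator $J_{7,30}J_{13,30}$ is $J_{1,30}J_{7,30}-q^{-4}J_{11,30}J_{13,30}$. Multiplying by the prefactor $q^{9}J_{30}^{3}/(J_{6,10}J_{9,90}J_{18,30})$ and expanding $J_{6,10}=J_{10,30}J_{4,30}J_{6,30}J_{14,30}/J_{30}^{3}$ via (\ref{equation:j-mod-inc}), the stated identity becomes equivalent to the single theta relation
\begin{equation*}
J_{5,30}J_{9,30}\bigl(J_{1,30}J_{7,30}-q^{-4}J_{11,30}J_{13,30}\bigr)=-q^{-4}J_{4,30}J_{10,30}J_{12,30}J_{14,30}.
\end{equation*}
The same steps turn the second identity into $J_{5,30}J_{6,30}\bigl(J_{4,30}J_{13,30}-q^{-1}J_{7,30}J_{14,30}\bigr)=-q^{-1}J_{1,30}J_{10,30}J_{11,30}J_{12,30}$.

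The hard part will be proving these two crux relations. Each reads ``a difference of two products of base-$q^{30}$ theta functions equals a single such product,'' but a short check of the admissible parameters $a,b,c,d$ shows that neither is a \emph{single} instance of the Weierstrass relation (\ref{equation:Weierstrass}): the pair-sums of the two left-hand products cannot be matched to those of the right-hand product, so no direct choice of $a,b,c,d$ succeeds. I would resolve this exactly as in the proof of Lemma~\ref{lemma:f232-X}, where an analogous two-term combination was handled: use (\ref{equation:j-mod-dec}) to re-expand selected factors over a finer common base (thereby introducing a root of unity), apply (\ref{equation:Weierstrass}) together with the duplication formula (\ref{equation:H1Thm1.2B}) to fuse the two products into one, and finish with elementary product rearrangements and the reflection $J_{a,30}=J_{30-a,30}$. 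The second identity is handled in the same way, starting from $z=q^{6}$, $z'=q^{18}$.
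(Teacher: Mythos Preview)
Your reduction is exactly what the paper does: plug into (\ref{equation:msplit3}), use $z'=z^{3}$ to kill the first bracketed term, normalize the remaining thetas by (\ref{equation:j-elliptic})--(\ref{equation:j-inv}), and expand $J_{6,10}$ by (\ref{equation:j-mod-inc}). Your two crux relations are also correct.

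Where you go wrong is the claim that ``neither is a \emph{single} instance of the Weierstrass relation.'' In fact both are. Your pair-sum check fails only because you did not allow the reflection $J_{a,30}=J_{30-a,30}$ before matching parameters. For the first crux, replace $J_{11,30}$ by $J_{19,30}$; then the quadruple $\{5,9,13,19\}$ splits as $\{13,19\}$ (sum $32$) and $\{5,9\}$ (sum $14$), giving $a=q^{16}$, $b=q^{7}$, and hence $c=q^{3}$, $d=q^{2}$. With these values (\ref{equation:Weierstrass}) in base $q^{30}$ reads
\[
J_{19,30}J_{13,30}J_{9,30}J_{5,30}=J_{18,30}J_{14,30}J_{10,30}J_{4,30}+q^{4}J_{23,30}J_{9,30}J_{5,30}J_{1,30},
\]
which after the reflections $J_{19,30}=J_{11,30}$, $J_{23,30}=J_{7,30}$, $J_{18,30}=J_{12,30}$ is precisely your first crux identity. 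For the second crux, replace $J_{4,30}$ by $J_{26,30}$ and $J_{7,30}$ by $J_{25,30}$ (say); the parameters $a=q^{16}$, $b=q^{10}$, $c=q^{9}$, $d=q^{4}$ give the identity directly. This is exactly how the paper finishes. No passage to a finer base, no root of unity, and no use of (\ref{equation:H1Thm1.2B}) is needed here---the analogy with Lemma~\ref{lemma:f232-X} is misleading.
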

\begin{proof} For the first identity, we use Corollary \ref{corollary:msplitn3zprime} to  have
{\allowdisplaybreaks \begin{align*}
D_{3}(q^3,q^{10},q^{3},q^{9})&=\frac{q^9J_{30}^4}{J_{6,10}J_{9,90}J_{18,30}}
 \Big [q^{-8}\frac{J_{1,30}}{J_{13,30}}
-q^{-12}\frac{J_{11,30}}{J_{23,30}}\Big ]\\
&=\frac{J_{30}^3}{J_{10}}\frac{q^9J_{30}^4}{J_{6,30}J_{16,30}J_{26,30}J_{9,90}J_{18,30}} 
 \Big [q^{-8}\frac{J_{1,30}}{J_{13,30}}
-q^{-12}\frac{J_{11,30}}{J_{23,30}}\Big ]\\
&=-\frac{q^{-3}J_{30}^7}{J_{10,30}J_{6,30}J_{16,30}J_{26,30}J_{9,90}J_{18,30}}\cdot 
 \Big [\frac{J_{4,30}J_{10,30}J_{14,30}}
{J_{5,30}J_{7,30}J_{13,30}}\frac{J_{12,30}}{J_{9,30}} \Big ],
\end{align*}}%
where we have used (\ref{equation:j-mod-inc}) with $n=3$ followed by the relation (\ref{equation:Weierstrass}) with $q\rightarrow q^{30}$, $a= q^{16}$, $b= q^7$, $c=q^3$,  $d= q^2$.  The result follows from simplifying.  The second identity is similar but follows from $q\rightarrow q^{30}$, $a= q^{16}$, $b= q^{10}$, $c=q^9$,  $d= q^{4}$.
\end{proof}

\begin{lemma}  \label{lemma:tenth-id-2} We have
{\allowdisplaybreaks \begin{align*}
D_{3}(q,q^{10},q^{-9},q^{-27})&=-\frac{J_{30}^7}{J_{18,30}J_{27,90}J_{3,30}}
\cdot \frac{1}{J_{1,30}J_{5,30}J_{11,30}},\\
D_{3}(q,q^{10},q^{-3},q^{-9})&=-q^{-3}\cdot \frac{J_{30}^7}{J_{18,30}J_{9,90}J_{3,30}}
\cdot \frac{1}{J_{5,30}J_{7,30}J_{13,30}}.
\end{align*}}%
\end{lemma}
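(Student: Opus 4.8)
The plan is to compute both specializations directly from Corollary \ref{corollary:msplitn3zprime}, exactly mirroring the method already used for Lemma \ref{lemma:tenth-id-1}, since the structure of these $D_3$ evaluations is identical. I would substitute $x=q$, $q\rightarrow q^{10}$ into (\ref{equation:msplit3}), first with $(z,z')=(q^{-9},q^{-27})$ and then with $(z,z')=(q^{-3},q^{-9})$. In each case the three-term sum on the right of (\ref{equation:msplit3}) collapses: the prefactor $z'J_3^3/\big[j(xz;q)j(z';q^9)j(x^3z';q^3)\big]$ becomes a monomial times a ratio of $J$-symbols, and the three bracketed terms reduce via the ellipticity relation (\ref{equation:j-elliptic}) to theta functions $j(\cdot;q^{30})$ with exponents that can be normalized into the standard range. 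I expect one of the three terms to either vanish or combine, just as the $i=2$ term vanished in the corollary following (\ref{equation:DTH}), leaving a two-term bracket.

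Next I would reduce that two-term bracket to a single quotient. Following the template of Lemma \ref{lemma:tenth-id-1}, I would first apply (\ref{equation:j-mod-inc}) with $n=3$ to split one of the $J_{a,10}$-type factors in the denominator into a product of three theta functions modulo $q^{30}$, introducing the factor $J_{30}^3/J_{10}$. This puts everything on a common modulus $q^{30}$ and exposes a difference of two products of four theta functions, which is precisely the shape to which the Weierstrass relation (\ref{equation:Weierstrass}) applies. I would then invoke Proposition \ref{proposition:Weierstrass-id} with $q\rightarrow q^{30}$ and a judicious choice of $a,b,c,d$ (analogous to the choices $a=q^{16}$, $b=q^7$, $c=q^3$, $d=q^2$ used before) so that the two-term bracket becomes a single product of four theta functions times the monomial $b/c$. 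The remaining work is bookkeeping: collecting powers of $q$, using ellipticity (\ref{equation:j-elliptic}) and the inversion symmetry (\ref{equation:j-inv}) to normalize each $J_{a,30}$ into the canonical residues appearing in the stated answers.

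The main obstacle will be identifying the correct Weierstrass parameters $a,b,c,d$ for each of the two identities; because the two choices must differ (the answers have different residue sets, $\{J_{1,30},J_{5,30},J_{11,30}\}$ versus $\{J_{5,30},J_{7,30},J_{13,30}\}$), the exponents inside the collapsed bracket determine the correct substitution, and matching the eight theta arguments $ac,a/c,bd,b/d$ and $ad,a/d,bc,b/c$ against the two products in the bracket is the genuinely delicate step. A secondary bookkeeping hazard is tracking the half-integer and negative exponents produced by (\ref{equation:j-elliptic}) when normalizing arguments such as $J_{27,90}$ and the negative-power arguments coming from $z=q^{-9}$; these must be reduced carefully so that the prefactor monomials cancel to leave exactly $-1$ and $-q^{-3}$ respectively. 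Once the Weierstrass substitution is pinned down, the rest is the same elementary product rearrangement that closes Lemma \ref{lemma:tenth-id-1}, so I would state the two parameter choices explicitly and then write ``the result follows from simplifying,'' as the author does throughout.
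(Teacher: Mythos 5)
Your plan matches the paper's proof: the author likewise specializes Corollary \ref{corollary:msplitn3zprime}, observes that one of the three bracketed terms vanishes (here because $j(z^3/z';q^{90})=j(1;q^{90})=0$), collapses the remaining two-term bracket with the Weierstrass relation (\ref{equation:Weierstrass}) at $q\rightarrow q^{30}$ (with $a=q^9$, $b=q^4$, $c=q^2$, $d=q$), and finishes by product rearrangement using (\ref{equation:j-mod-inc}). The only unfinished business in your write-up is pinning down those Weierstrass parameters, which you correctly flag as the delicate step.
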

\begin{proof}    For the first identity, we use Corollary \ref{corollary:msplitn3zprime} to  have
{\allowdisplaybreaks \begin{align*}
D_{3}(q,q^{10},q^{-9},q^{-27})&=-\frac{J_{30}^4}{J_{2,10}J_{27,90}J_{24,30}}
\Big [ \frac{J_{23,30}}{J_{1,30}}
-q^{2}\frac{J_{13,30}}{J_{11,30}}\Big ]\\
&=-\frac{J_{30}^4}{J_{2,10}J_{27,90}J_{24,30}}
\Big [ \frac{J_{2,30}J_{6,30}J_{8,30}J_{10,30}}{J_{1,30}J_{3,30}J_{5,30}J_{11,30}}\Big ],
\end{align*}}%
where we have used the relation (\ref{equation:Weierstrass}) with $q\rightarrow q^{30}$, $a=q^9$, $b=q^4$, $c=q^2$,  $d=q$.  The result follows from simplification.  The second identity follows from $q\rightarrow q^{30}$, $a=q^9$, $b=q^4$, $c=q^2$,  $d=q$.
\end{proof}

\begin{lemma} \label{lemma:tenth-id-3} We have
{\allowdisplaybreaks \begin{align}
&D_{3}(-q,q^{5},-q^{-3},-q^{-9})=- \frac{J_{15}^7}{J_{12,15}\overline{J}_{9,45}\overline{J}_{3,15}}
\cdot \frac{1}{\overline{J}_{2,15}\overline{J}_{7,15}\overline{J}_{5,15}}, \\
&D_{3}(-q,q^5,q^3,q^9)=q^{-1}\cdot \frac{J_{15}^2J_{30}^4J_{3,15}}{J_{9,45}\overline{J}_{12,15}J_{12,30}}
\cdot  \frac{1}{J_{2,30}J_{8,30}J_{5,30}}.
\end{align}}%
\end{lemma}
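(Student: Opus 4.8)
The plan is to follow the template of Lemmas \ref{lemma:tenth-id-1} and \ref{lemma:tenth-id-2}: substitute each specialization into the three-term formula (\ref{equation:msplit3}) of Corollary \ref{corollary:msplitn3zprime}, collapse the resulting bracket to a single theta quotient by the Weierstrass relation (\ref{equation:Weierstrass}), and finish with elementary product rearrangement. The key simplification in both cases is that the $r=0$ summand of (\ref{equation:msplit3}) vanishes. Under $q\mapsto q^{5}$ the factor $j(z^3/z';q^{9})$ appearing in that summand becomes $j(z^3/z';q^{45})$, and for $(z,z')=(-q^{-3},-q^{-9})$ one has $z^3/z'=(-q^{-9})/(-q^{-9})=1$, while for $(z,z')=(q^{3},q^{9})$ one again has $z^3/z'=q^{9}/q^{9}=1$; since $j(1;q^{45})=0$, only the $r=1$ and $r=2$ terms survive, exactly as happened in the earlier $D_3$ lemmas.

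For the first identity I would put $x=-q$, $q\mapsto q^{5}$, $z=-q^{-3}$, $z'=-q^{-9}$ in (\ref{equation:msplit3}). The prefactor supplies $J_{15}^{3}$ (from $J_{3}^{3}$) over the three denominator theta functions $j(xz;q^{5})$, $j(z';q^{45})$, $j(x^{3}z';q^{15})$; I would recast the base-$q^{5}$ factor $j(xz;q^{5})$ in base $q^{15}$ using (\ref{equation:j-mod-inc}) with $n=3$, exactly as in Lemma \ref{lemma:tenth-id-1}, so that all theta functions share the moduli $q^{15}$ and $q^{45}$. After normalizing every surviving argument into a standard residue via the elliptic relation (\ref{equation:j-elliptic}), the two-term bracket takes the form $\alpha\,j(a_{1};q^{15})/j(b_{1};q^{15})-\beta\,j(a_{2};q^{15})/j(b_{2};q^{15})$; clearing to a common denominator and invoking (\ref{equation:Weierstrass}) with $q\mapsto q^{15}$ and a suitable $(a,b,c,d)$ collapses it to a single theta product, and the stated right-hand side — whose overlined $\overline{J}$ factors come from the sign choices in $x,z,z'$ — falls out after the product rearrangement that assembles the $J_{15}^{7}$.

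The second identity $D_{3}(-q,q^{5},q^{3},q^{9})$ runs through the same steps, but its right-hand side mixes modulus $15$ and modulus $30$, so one extra manipulation is needed. After the three-term relation produces a base-$q^{15}$ product I would split the relevant base-$q^{15}$ theta factor into base-$q^{30}$ factors using (\ref{equation:j-mod-inc}) with $n=2$ (using (\ref{equation:j-neg-mod}) instead wherever a sign must be absorbed); this is precisely what generates the $J_{30}^{4}$ and the modulus-$30$ $J$-factors $J_{2,30}$, $J_{5,30}$, $J_{8,30}$, $J_{12,30}$, together with the leading power $q^{-1}$.

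I expect the difficulty to be bookkeeping rather than conceptual. The genuinely delicate points are pinning down the Weierstrass parameters $(a,b,c,d)$ under $q\mapsto q^{15}$ that force the two surviving terms to combine into a single factor, and keeping the chain of elliptic shifts (\ref{equation:j-elliptic}) together with the base-$15$-to-$30$ conversion consistent in sign and in $q$-power, so that the prefactors $-1$ and $q^{-1}$ and the residue labels on the $\overline{J}$ and $J$ symbols all come out exactly as stated.
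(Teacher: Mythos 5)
Your reduction is on target for the first identity: the $r=0$ term of (\ref{equation:msplit3}) does vanish because $z^3/z'=1$, and there the two surviving numerator factors $j(q^5x^3zz';q^{15})\propto\overline{J}_{4,15}$ and $j(q^{10}x^3zz';q^{15})\propto\overline{J}_{1,15}$ are \emph{distinct}, so putting the bracket over the common denominator $\overline{J}_{2,15}\overline{J}_{7,15}$ yields a difference of products of two theta functions; after expanding $j(xz;q^5)$ in base $q^{15}$ via (\ref{equation:j-mod-inc}) with $n=3$, exactly as you describe, this completes to an instance of (\ref{equation:Weierstrass}) with $q\rightarrow q^{15}$, $a=-q^7$, $b=q^5$, $c=q^3$, $d=-q^2$. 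That is precisely the paper's argument.

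The plan breaks down for the second identity, and the obstruction is structural rather than bookkeeping. For $(x,z,z')=(-q,q^3,q^9)$ one has $x^3zz'=-q^{15}$, so the two surviving numerators $j(-q^{20};q^{15})$ and $j(-q^{25};q^{15})$ both reduce to $\overline{J}_{5,15}$ up to powers of $q$, and likewise $j(q^{15};q^{45})=j(q^{30};q^{45})=J_{15,45}$. Factoring out $\overline{J}_{5,15}J_{15,45}$, the bracket degenerates to $q^{-10}\bigl(qJ_{2,15}+J_{8,15}\bigr)/(J_{2,15}J_{8,15})$ times that common factor: the quantity to be collapsed is a \emph{sum of two single theta functions}, not a difference of products of pairs. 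The Weierstrass relation (\ref{equation:Weierstrass}) equates products of four theta functions, and no multiplication by common factors turns $\alpha J_{8,15}+\beta J_{2,15}$ into two of its three terms (two of the four-fold products in a generic instance of (\ref{equation:Weierstrass}) never share three factors). The tool that does the job is (\ref{equation:H1Thm1.0}) with $q\rightarrow q^{5}$, $x=q$, giving $J_{8,15}+qJ_{2,15}=\overline{J}_{1,5}J_{3,10}/J_{10}=J_5J_{2,5}/J_{1,5}$; the factor $\overline{J}_{1,5}$ cancels against $j(xz;q^5)=j(-q^4;q^5)=\overline{J}_{1,5}$ in the prefactor, and the base-$q^{10}$ theta, split into base $q^{30}$, is what produces the modulus-$30$ quantities in the stated answer. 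This is exactly what the paper signals by ``the second identity \ldots uses instead (\ref{equation:H1Thm1.0}).'' Your base-$15$-to-$30$ conversion is placed \emph{after} a Weierstrass collapse that cannot occur; it could be salvaged by splitting $J_{2,15}$ and $J_{8,15}$ into base-$q^{30}$ pairs \emph{first} and then applying (\ref{equation:H1Thm1.1}), but as written the argument for the second evaluation does not go through.
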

\begin{proof} For the first identity, we use Corollary \ref{corollary:msplitn3zprime} to obtain
{\allowdisplaybreaks \begin{align*}
D_{3}(-q,q^{5},-q^{-3},-q^{-9})
&=-\frac{J_{15}^4}{J_{2,5}\overline{J}_{9,45}J_{6,15}}\Big [ 
 \frac{\overline{J}_{4,15}}{\overline{J}_{2,15}}
-q^{2}\frac{\overline{J}_{1,15}}{\overline{J}_{7,15}}\Big ]\\
&=-\frac{J_{15}^3}{J_5}\frac{J_{15}^4}{J_{2,15}J_{7,15}J_{12,15}\overline{J}_{9,45}J_{6,15}}\Big [ 
 \frac{\overline{J}_{4,15}}{\overline{J}_{2,15}}
-q^{2}\frac{\overline{J}_{1,15}}{\overline{J}_{7,15}}\Big ]\\
&=-\frac{J_{15}^7}{J_{5,15}J_{2,15}J_{7,15}J_{12,15}\overline{J}_{9,45}J_{6,15}}\Big [ 
\frac{J_{2,15}J_{5,15}J_{7,15}J_{6,15}}{\overline{J}_{2,15}\overline{J}_{7,15}\overline{J}_{5,15}\overline{J}_{3,15}}
\Big ],
\end{align*}}%
where we have used (\ref{equation:j-mod-inc}) with $n=3$ followed by (\ref{equation:Weierstrass}) with $q\rightarrow q^{15}$, $a=-q^7$, $b=q^5$, $c=q^3$, $d=-q^2$.   The proof for the second identity is similar but uses instead (\ref{equation:H1Thm1.0}).
\end{proof}

\begin{lemma} \label{lemma:tenth-id-4} We have
{\allowdisplaybreaks \begin{align*}
D_{3}(-q^2,q^5,q^6,q^{18})&=-q\cdot\frac{J_{30}^4J_{15}^2J_{6,15}}{J_{18,45}\overline{J}_{9,15}J_{24,30}}
\cdot \frac{1}{J_{4,30}J_{14,30}J_{5,30}},\\
D_{3}(-q^2,q^5,q^9,q^{27})&= q^2\cdot \frac{J_{30}J_{15}^5J_{3,15}}{J_{27,45}\overline{J}_{3,15}J_{12,30}}
\cdot \frac{1}{J_{1,15}J_{4,15}\overline{J}_{5,15}}.
\end{align*}}%
\end{lemma}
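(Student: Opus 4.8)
The plan is to treat both evaluations as direct specializations of the $n=3$ split, exactly as in the proofs of Lemmas \ref{lemma:tenth-id-1}--\ref{lemma:tenth-id-3}. First I would substitute the given data into Corollary \ref{corollary:msplitn3zprime}: for the first identity $x=-q^2$, $q\to q^5$, $z=q^6$, $z'=q^{18}$, and for the second $x=-q^2$, $q\to q^5$, $z=q^9$, $z'=q^{27}$. In both cases $z^3=z'$, so the factor $j(z^3/z';q^{45})=j(1;q^{45})=0$ annihilates the $r=0$ term of the bracket in (\ref{equation:msplit3}), and I am left with only the $r=1$ and $r=2$ summands. I would then simplify the prefactor $z'J_3^3/\big(j(xz;q)j(z';q^9)j(x^3z';q^3)\big)$ and all surviving theta arguments to canonical residues by repeated use of (\ref{equation:j-elliptic}) and (\ref{equation:j-inv}), carefully collecting the powers of $q$ and the signs.

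The two remaining terms behave differently in the two identities, and this dictates the combining step. For $D_3(-q^2,q^5,q^6,q^{18})$ the two base-$q^{45}$ thetas both reduce to $J_{15,45}$ and the two numerator thetas both reduce to $\overline{J}_{5,15}$, so after factoring these out the combination collapses to a sum of reciprocals of the two denominator thetas $j(q^{11};q^{15})$ and $j(q^{16};q^{15})$. Crucially, I would \emph{not} reduce these denominators individually; placed over a common denominator their numerator becomes $j(q^{11};q^{15})+q^2 j(q^{16};q^{15})$, a sum of two theta functions whose arguments differ by the factor $q^5$. This is precisely the left-hand side of Hickerson's (\ref{equation:H1Thm1.0}) with $q\to q^5$ and $x=q^2$, which evaluates it as the single quotient $J_5 J_{4,5}/J_{2,5}$ (equivalently $\overline{J}_{2,5}J_{1,10}/J_{10}$), mirroring the second part of Lemma \ref{lemma:tenth-id-3}. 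For $D_3(-q^2,q^5,q^9,q^{27})$, by contrast, both the numerator and the denominator thetas vary between the two terms, so over a common denominator the numerator is the difference of products $\overline{J}_{2,15}J_{4,15}-\overline{J}_{7,15}J_{1,15}$; this is the Weierstrass-ready form, and I would collapse it to a single product using the three-term relation (\ref{equation:Weierstrass}) at base $q^{15}$ with a suitable choice of $a,b,c,d$, exactly as in the first part of Lemma \ref{lemma:tenth-id-3}. Once the bracket is a single theta quotient, each identity is reduced to a pure product identity, which I would finish by converting the base-$q^5$ and base-$q^{15}$ factors into base-$q^{30}$ factors via (\ref{equation:j-mod-inc}) and (\ref{equation:j-mod-dec}) (this is where the $J_{30}$, $J_{4,30}$, $J_{14,30}$ in the stated answers arise) and by elementary product rearrangements.

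I expect the combining step and the subsequent base bookkeeping to be the main obstacle. The delicate point in the first identity is organizational rather than computational: one must recognize that the two denominator thetas should be combined \emph{before} being simplified, so that the $q^5$-periodicity required by (\ref{equation:H1Thm1.0}) is visible. Simplifying them first to $J_{4,15}$ and $-q^{-1}J_{1,15}$ leaves a difference $qJ_{1,15}-J_{4,15}$ whose arguments differ by $q^3$, which no single application of (\ref{equation:H1Thm1.0}) or (\ref{equation:Weierstrass}) can collapse. In the second identity the hurdle is instead pinning down the Weierstrass parameters $a,b,c,d$ matching the mixed-sign difference $\overline{J}_{2,15}J_{4,15}-\overline{J}_{7,15}J_{1,15}$, after which the coexistence of the bases $q^{15},q^{30},q^{45}$ in the target makes the final product rearrangement the most error-prone part of the argument.
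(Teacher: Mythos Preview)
Your plan is correct and mirrors the paper's proof almost exactly: specialize Corollary~\ref{corollary:msplitn3zprime}, use $z^3=z'$ to kill the $r=0$ term, and collapse the remaining two summands into a single theta quotient. For the second identity your Weierstrass approach is precisely what the paper does, with parameters $q\to q^{15}$, $a=-q^6$, $b=q^5$, $c=q^4$, $d=q$.

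The one point of divergence is the first identity. The paper \emph{does} reduce the denominators first, obtaining the bracket $\big[q/J_{11,15}-1/J_{1,15}\big]$, and then collapses it with a single application of (\ref{equation:Weierstrass}) at base $q^{15}$ with $a=q^5$, $b=q^3$, $c=q^2$, $d=q$; so your assertion that no single Weierstrass step can handle the simplified difference $qJ_{1,15}-J_{4,15}$ is mistaken. Your alternative route through (\ref{equation:H1Thm1.0}) with $q\to q^5$, $x=q^2$ is perfectly valid and arguably cleaner, since it avoids having to discover the Weierstrass parameters, but it is not forced: both methods land on equivalent single quotients and the remaining product rearrangement is identical.
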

\begin{proof}  For both identities we use Corollary \ref{corollary:msplitn3zprime}.  For the first identity, we obtain
{\allowdisplaybreaks \begin{align*}
D_{3}(-q^2,q^5,q^6,q^{18})&=\frac{qJ_{15}^4\overline{J}_{5,15}}{\overline{J}_{2,5}J_{18,45}\overline{J}_{9,15}}
\Big [ q\frac{1}{J_{11,15}}
-\frac{1}{J_{1,15}}\Big ]\\
&=-\frac{qJ_{15}^4\overline{J}_{5,15}}{\overline{J}_{2,5}J_{18,45}\overline{J}_{9,15}}
 \frac{J_{6,15}J_{5,15}}{J_{2,15}J_{3,15}J_{7,15}},
\end{align*}}%
where we have used the relation with $q\rightarrow q^{15}$, $a=q^5$, $b=q^3$, $c=q^2$, $d=q$.  The second identity follows from $q\rightarrow q^{15}$, $a=-q^6$, $b=q^5$, $c=q^4$, $d=q$.
\end{proof}

\section{Proofs of identities (\ref{equation:RLN-id-five}) and (\ref{equation:RLN-id-six})}\label{section:five-six}

Using identity (\ref{equation:id5-D2-expansion}) and Lemma \ref{lemma:msplit}, we have
{\allowdisplaybreaks \begin{align*}
\psi(q)&+q\phi(-q^4)+X(q^8)\\
&\ \ \ \ \ =\frac{J_{20}^3}{J_{1,10}J_{8,40}}
\frac{\overline{J}_{14,20}J_{20,40}} {\overline{J}_{8,20}J_{6,20}}
+\frac{qJ_{20}^3}{J_{7,10}J_{8,40}}
 \frac{\overline{J}_{18,20}J_{20,40}}{\overline{J}_{4,20}J_{6,20}}\\
&\ \ \ \ \ =\frac{J_{20}^3J_{20,40}}{J_{8,40}J_{6,20}} \frac{1}{J_{1,10}J_{7,10}\overline{J}_{4,20}{\overline{J}_{8,20}}}
\Big [ 
\overline{J}_{14,20}J_{7,10}\overline{J}_{4,20}
+
q \overline{J}_{18,20}J_{1,10}\overline{J}_{8,20}\Big ]\\
&\ \ \ \ \ =\frac{J_{20}^5J_{20,40}}{J_{10}J_{8,40}J_{6,20}} \frac{1}{J_{1,10}J_{7,10}\overline{J}_{4,20}{\overline{J}_{8,20}}}
\Big [ \overline{J}_{4,10}J_{7,10} +q \overline{J}_{2,10}J_{1,10}\Big ]\\
&\ \ \ \ \ =\frac{J_{20}^5J_{20,40}}{J_{10}J_{8,40}J_{6,20}} \frac{1}{J_{1,10}J_{7,10}\overline{J}_{4,20}{\overline{J}_{8,20}}}
\Big [ j(-q;-q^5)j(-q^3;-q^5)\Big ],
\end{align*}}%
where we have used (\ref{equation:j-mod-inc}) for the penultimate equality and (\ref{equation:H1Thm1.1}) for the last equality.  The result then follows from product rearrangements.

Using (\ref{equation:id6-D2-expansion}) and Lemma \ref{lemma:msplit-fifth} gives
{\allowdisplaybreaks \begin{align*}
\phi(q)&-q^{-1}\psi(-q^4)+q^{-2}\chi(q^8)\\
&=\frac{J_{20}^3J_{20,40}}{J_{24,40}J_{2,20}}\cdot \frac{1}{J_{3,10}\overline{J}_{4,40}J_{9,10}\overline{J}_{12,20}}\cdot
\Big [\overline{J}_{6,20}J_{3,10}\overline{J}_{4,20} +q\overline{J}_{2,20}J_{9,10}\overline{J}_{12,20}\Big ]\\
&=\frac{J_{20}^5J_{20,40}}{J_{10}J_{24,20}J_{2,20}}\cdot \frac{1}{J_{3,10}\overline{J}_{4,20}J_{9,10}\overline{J}_{12,20}}\cdot
\Big [J_{3,10}\overline{J}_{4,10} +q\overline{J}_{2,10}J_{9,10}\Big ]\\
&=\frac{J_{20}^5J_{20,40}}{J_{10}J_{24,20}J_{2,20}}\cdot \frac{1}{J_{3,10}\overline{J}_{4,20}J_{9,10}\overline{J}_{12,20}}\cdot
\Big [j(-q;-q^5)j(q^2;-q^5)\Big ],
\end{align*}}%
where we have used (\ref{equation:j-mod-inc}) for the penultimate equality and (\ref{equation:H1Thm1.1}) for the last equality.  The result then follows from product rearrangements.

\section{Proofs of identities (\ref{equation:tenth-id-1}) and (\ref{equation:tenth-id-2})}\label{section:first-and-second}
To prove identity (\ref{equation:tenth-id-1}), we use identity (\ref{equation:id1-D3-expansion}) and Lemma \ref{lemma:tenth-id-1} to obtain
{\allowdisplaybreaks \begin{align*}
q^{2}&\phi(q^9)-\frac{\psi(\omega q)-\psi(\omega^2 q)}{\omega - \omega^2}\\
&=  - \frac{1}{\omega-\omega^2} \frac{q^{-3}J_{30}^7J_{12,30}}{J_{6,30}J_{9,30}J_{9,90}J_{18,30}}
 \Big [ 
\frac{1}{j( \omega^2q^5 ;q^{30})j(\omega q^{7};q^{30})j(\omega q^{13};q^{30})} \\
& \ \ \ \ \ \ \ \ \ \ - \frac{1}{j( \omega q^5 ;q^{30})j(\omega^2 q^{7};q^{30})j(\omega^2 q^{13};q^{30})}\Big ]\\
&\ \ \ \ \ - \frac{1}{\omega-\omega^2}\frac{q^{-3}J_{30}^7J_{12,30}}{J_{6,30}J_{9,30}J_{18,90}J_{27,30}}
\Big [ 
 \frac{1}{j( \omega q^4;q^{30})j( \omega^2q^{5};q^{30})j(  \omega^2q^{14};q^{30})} \\
&\ \ \ \ \ \ \ \ \ \ - \frac{1}{j( \omega^2 q^4;q^{30})j( \omega q^{5};q^{30})j(  \omega q^{14};q^{30})}\Big ]\\
&= \frac{1}{\omega-\omega^2} \frac{q^{-3}J_{30}^7J_{12,30}}{J_{6,30}J_{9,30}J_{9,90}J_{18,30}}
  \frac{J_{90}^3}{J_{30}^9}\frac{J_{5,30}J_{7,30}J_{13,30}}{J_{15,90}J_{21,90}J_{39,90}}\cdot \\
&\ \ \ \ \ \ \ \ \ \ \cdot \Big [ j( \omega^2q^5 ;q^{30})j(\omega q^{7};q^{30})j(\omega q^{13};q^{30})
- j( \omega q^5 ;q^{30})j(\omega^2 q^{7};q^{30})j(\omega^2 q^{13};q^{30})\Big ] \\
&\ \ \ \ \ \  + \frac{1}{\omega-\omega^2}\frac{q^{-3}J_{30}^7J_{12,30}}{J_{6,30}J_{9,30}J_{18,90}J_{27,30}}
 \frac{J_{90}^3}{J_{30}^9}\frac{J_{4,30}J_{5,30}J_{14,30}}{J_{12,90}J_{15,90}J_{52,90}}\cdot  \\
&\ \ \ \ \ \ \ \ \ \  \cdot \Big [ j( \omega q^4;q^{30})j( \omega^2q^{5};q^{30})j(  \omega^2q^{14};q^{30}) 
 - j( \omega^2 q^4;q^{30})j( \omega q^{5};q^{30})j(  \omega q^{14};q^{30})\Big ],
\end{align*}}%
where we have pulled fractions over a common denominator.  Using the relation (\ref{equation:Weierstrass}) with $q\rightarrow q^{30}$, $a=q^{12}$, $b=q^{10}$, $c=\omega ^2 q^5 $, $d=\omega q^5$, and also $q\rightarrow q^{30}$, $a=q^{9}$, $b=q^{10}$, $c=\omega ^2 q^5 $, $d=\omega q^5 $, we have
{\allowdisplaybreaks \begin{align*}
q^2&\phi(q^9)-\frac{\psi(\omega q)- \psi(\omega^2 q)}{\omega -\omega^2}\\
&= \frac{1}{\omega-\omega^2} \frac{q^{-3}J_{30}^7J_{12,30}}{J_{6,30}J_{9,30}J_{9,90}J_{18,30}}
  \frac{J_{90}^3}{J_{30}^9}\frac{J_{5,30}J_{7,30}J_{13,30}}{J_{15,90}J_{21,90}J_{39,90}}
   \Big [ \omega q^5  \frac{ J_{22,30}J_{2,30}J_{10,30}j(\omega;q^{30})}{j(q^{15}\omega;q^{30})}\Big ] \\
&\ \ \ \ \ \  + \frac{1}{\omega-\omega^2}\frac{q^{-3}J_{30}^7J_{12,30}}{J_{6,30}J_{9,30}J_{18,90}J_{27,30}}
 \frac{J_{90}^3}{J_{30}^9}\frac{J_{4,30}J_{5,30}J_{14,30}}{J_{12,90}J_{15,90}J_{52,90}}\cdot  \\
&\ \ \ \ \ \ \ \ \ \  \cdot \Big [\omega q^5 \frac{J_{19,30}j(q^{-1};q^{30})J_{10,30}j(\omega;q^{30})}{j(q^{15}\omega;q^{15})}\Big ]\\
&= q^2\frac{J_{30}^2}{J_{9,30}}
  \frac{J_{2,5}J_{15}}{J_{6,15}J_{3,15}}
  -q \frac{J_{30}^2}{J_{9,30}} \frac{J_{18,30}}{J_{6,30}}
   \frac{J_{1,5}}{J_{6,15}} \frac{J_{15}}{J_{3,15}},
\end{align*}}%
where the last line follows from elementary simplification.  Proving identity (\ref{equation:tenth-id-1}) thus reduces to showing
\begin{equation}
q^2\frac{J_{30}^2}{J_{9,30}}\frac{J_{2,5}J_{15}}{J_{6,15}J_{3,15}}
   -q \frac{J_{30}^2}{J_{9,30}} \frac{J_{18,30}}{J_{6,30}} \frac{J_{1,5}}{J_{6,15}} \frac{J_{15}}{J_{3,15}}
   =-q\frac{J_{1,2}}{J_{3,6}}\frac{J_{3,15}J_{6}}{J_{3}},\label{equation:tenth-id-1-almost}
\end{equation}
which is obtained by dividing identity (\ref{equation:id-1}) by $J_{3,15}J_{6,15}^2/J_{15}^2$.

To prove identity (\ref{equation:tenth-id-2}), we use identity (\ref{equation:id2-D3-expansion}) and Lemma \ref{lemma:tenth-id-2} to obtain
{\allowdisplaybreaks \begin{align*}
&q^{-2}\psi(q^9)+\frac{\omega \phi(\omega q)-\omega^2\phi(\omega^2 )}{\omega - \omega^2}\\
&\ =\frac{q^{-1}}{\omega-\omega^2} \frac{J_{30}^7}{J_{18,30}J_{27,90}J_{3,30}} 
\Big [ \frac{1}{j(\omega q;q^{30})j(\omega ^2q^5;q^{30})j(\omega^2 q^{11};q^{30})}\\
&\ \ \ \ \ \ \ \ \ \  - \frac{1}{j(\omega^2q;q^{30})j(\omega q^5;q^{30})j(\omega q^{11};q^{30})}\Big ] \\
&\ \ \ \ \  +\frac{q^{-4}}{\omega-\omega^2} \frac{J_{30}^7}{J_{18,30}J_{9,90}J_{3,30}}
\Big [ \frac{1}{j(\omega^2 q^5;q^{30})j(\omega q^7;q^{30})j(\omega q^{13};q^{30})}\\
&\ \ \ \ \ \ \ \ \ \ -  \frac{1}{j(\omega q^5;q^{30})j(\omega^2q^7;q^{30})j(\omega^2 q^{13};q^{30})}\Big ]\\
&\ =\frac{q^{-1}}{\omega-\omega^2} \frac{J_{30}^7}{J_{18,30}J_{27,90}J_{3,30}}
\frac{J_{90}^3J_{1,30}J_{5,30}J_{11,30}}{J_{30}^9J_{3,90}J_{15,90}J_{33,90}} \cdot\\
&\ \ \ \ \ \ \ \ \ \ \cdot \Big [ j(\omega^2q;q^{30})j(\omega q^5;q^{30})j(\omega q^{11};q^{30})
-j(\omega q;q^{30})j(\omega ^2q^5;q^{30})j(\omega^2 q^{11};q^{30})\Big ] \\
&\ \ \ \ \  +\frac{q^{-4}}{\omega-\omega^2} \frac{J_{30}^7}{J_{18,30}J_{9,90}J_{3,30}}\frac{J_{90}^3J_{5,30}J_{7,30}J_{13,30}}{J_{30}^9J_{15,90}J_{21,90}J_{39,90}}\cdot \\
&\ \ \ \ \ \ \ \ \ \ \ \ \ \ \ \cdot \Big [ j(\omega q^5;q^{30})j(\omega^2q^7;q^{30})j(\omega^2 q^{13};q^{30})
-j(\omega^2 q^5;q^{30})j(\omega q^7;q^{30})j(\omega q^{13};q^{30}) \Big ].
\end{align*}}%
Using the relation (\ref{equation:Weierstrass}) with $q\rightarrow q^{30}$, $a=q^{10}$, $b=q^6$, $c=\omega^2 q^5$, $d=\omega q^5$ and also $q\rightarrow q^{30}$, $a=q^{10}$, $b=q^{12}$, $c=\omega^2q^5$, $d=\omega q^5$, yields
{\allowdisplaybreaks \begin{align*}
&q^{-2}\psi(q^9)+\frac{\omega \phi(\omega q)-\omega^2\phi(\omega^2 )}{\omega - \omega^2}\\
&\ =\frac{q^{-1}}{\omega-\omega^2} \frac{J_{30}^7}{J_{18,30}J_{27,90}J_{3,30}}
\frac{J_{90}^3J_{1,30}J_{5,30}J_{11,30}}{J_{30}^9J_{3,90}J_{15,90}J_{33,90}} 
 \Big [\frac{\omega q J_{16,30}J_{4,30}j(\omega;q^{30})J_{10,30}}{j(\omega q^{15},q^{30})}\Big ] \\
&\ \ \ \ \  +\frac{q^{-4}}{\omega-\omega^2} \frac{J_{30}^7}{J_{18,30}J_{9,90}J_{3,30}}\frac{J_{90}^3J_{5,30}J_{7,30}J_{13,30}}{J_{30}^9J_{15,90}J_{21,90}J_{39,90}}
 \Big [ -\frac{\omega q^5J_{22,20}J_{2,20}j(\omega;q^{30})J_{10,30}}{j(\omega q^{15},q^{30})} \Big ]\\
 &\ =\frac{J_{1,5}J_{15}}{J_{3,15}J_{6,15}}\frac{J_{30}^2}{J_{3,30}}-q\frac{J_{2,5}J_{15}}{J_{3,15}^2}\frac{J_{30}^2}{J_{9,30}},
\end{align*}}%
where the last line follows from simplification.  Thus proving (\ref{equation:tenth-id-2}) is equivalent to showing
\begin{equation}
\frac{J_{1,5}J_{15}}{J_{3,15}J_{6,15}}\frac{J_{30}^2}{J_{3,30}}-q\frac{J_{2,5}J_{15}}{J_{3,15}^2}\frac{J_{30}^2}{J_{9,30}}
=\frac{J_{1,2}}{J_{3,6}}\frac{J_{6,15}J_{6}}{J_{3}}
\end{equation}
which is obtained by dividing identity (\ref{equation:id-1}) by $J_{3,15}^2J_{6,15}/J_{15}^2$.

\section{Proofs of identities (\ref{equation:tenth-id-3}) and (\ref{equation:tenth-id-4})}  \label{section:third-and-fourth}

To prove identity (\ref{equation:tenth-id-3}), we use identity (\ref{equation:id3-D3-expansion}) and Lemma \ref{lemma:tenth-id-3} to obtain
{\allowdisplaybreaks \begin{align*}
X&(q^9)-\frac{\omega\chi(\omega q)-\omega^2 \chi(\omega^2 q)}{\omega-\omega^2 }\\
&=\frac{1}{1-\omega}  \frac{J_{15}^7}{J_{12,15}\overline{J}_{9,45}\overline{J}_{3,15}}
 \Big [  \frac{1}{j(-\omega^2 q^2;q^{15})j(-\omega q^7;q^{15})j(-\omega^2 q^5;q^{15})}\\
&\ \ \ \ \  -  \frac{\omega }{j(-\omega q^2;q^{15})j(-\omega ^2q^7;q^{15})j(-\omega q^5;q^{15})}\Big ]  \\
&\ \  -\frac{\omega^2}{1-\omega} \frac{1}{q} \frac{J_{15}^2J_{30}^4J_{3,15}}{J_{9,45}\overline{J}_{12,15}J_{12,30}} \Big[  \frac{1 }{j(\omega^2 q^2;q^{30})j(\omega^2 q^{8};q^{30})j(\omega ^2 q^5;q^{30})}\\
&\ \ \ \ \  -\frac{1}{j(\omega q^2;q^{30})j(\omega q^{8};q^{30})j(\omega q^5;q^{30})} \Big] \\
&=\frac{1}{1-\omega}  \frac{J_{15}^7}{J_{12,15}\overline{J}_{9,45}\overline{J}_{3,15}}
 \frac{J_{45}^3\overline{J}_{2,15}\overline{J}_{5,15}\overline{J}_{7,15}}{J_{15}^9\overline{J}_{6,45}\overline{J}_{15,45}\overline{J}_{21,45}}
 \Big [ j(-\omega q^2;q^{15})j(-\omega ^2q^7;q^{15})j(-\omega q^5;q^{15})\\
&\ \ \ \ \ \ \ \ \ \   -\omega j(-\omega^2 q^2;q^{15})j(-\omega q^7;q^{15})j(-\omega^2 q^5;q^{15}) \Big ]  \\
&\ \  -\frac{\omega^2}{1-\omega} \frac{1}{q} \frac{J_{15}^2J_{30}^4J_{3,15}}{J_{9,45}\overline{J}_{12,15}J_{12,30}}\frac{J_{90}^3}{J_{30}^9}\frac{J_{2,30}J_{8,30}J_{5,30}}{J_{6,90}J_{24,90}J_{15,90}}\cdot \\
&\ \ \ \ \  \cdot \Big[j(\omega q^2;q^{30})j(\omega q^{8};q^{30})j(\omega q^5;q^{30})
- j(\omega^2 q^2;q^{30})j(\omega^2 q^{8};q^{30})j(\omega ^2 q^5;q^{30}) \Big].
\end{align*}}%
Using the relation (\ref{equation:Weierstrass}) with $q\rightarrow q^{15}$, $a=q^{10}$, $b=-\omega q^5 $, $c=-\omega^2 q^5$, $d=q^3$, and with $q\rightarrow q^{30}$, $a=\omega q^5 $, $b=\omega^2 q^5$, $c=q^3$, $d=\omega $, yields
{\allowdisplaybreaks \begin{align*}
X&(q^9)-\frac{\omega \chi(\omega q)-\omega^2 \chi(\omega^2 q)}{\omega-\omega^2 }\\
&=\frac{1}{1-\omega}  \frac{J_{15}^7}{J_{12,15}\overline{J}_{9,45}\overline{J}_{3,15}} 
 \frac{J_{45}^3\overline{J}_{2,15}\overline{J}_{5,15}\overline{J}_{7,15}}{J_{15}^9\overline{J}_{6,45}\overline{J}_{15,45}\overline{J}_{21,45}} \Big [ \frac{J_{13,15}J_{7,15}J_{10,15}j(\omega^2;q^{15})}{j(-\omega^2 q^{15};q^{15})} \Big ]  \\
&\ \ \ \ \ -\frac{\omega^2}{1-\omega} \frac{1}{q} \frac{J_{15}^2J_{30}^4J_{3,15}}{J_{9,45}\overline{J}_{12,15}J_{12,30}} \frac{J_{90}^3}{J_{30}^9}\frac{J_{2,30}J_{8,30}J_{5,30}}{J_{6,90}J_{24,90}J_{15,90}}\cdot \\
&\ \ \ \ \ \ \ \ \ \ \cdot \Big[\frac{\omega^2 q^2 J_{10,30}j(\omega^2;q^{30})j(\omega q^3;q^{30})j(\omega^2 q^3;q^{30})}{J_{5,30}}\Big]\\
&=  \frac{J_{4,30}J_{14,30}}{\overline{J}_{6,15}}
  \frac{J_{10}J_{15}^2}{J_{6,30}J_{30}^2}
    + q \frac{J_{2,30}J_{8,30}}{\overline{J}_{3,15}}\frac{J_{10}J_{15}^2}{J_{6,30}J_{30}^2},
\end{align*}}%
where the last line follows from simplification.  Thus proving (\ref{equation:tenth-id-3}) is equivalent to showing
\begin{equation}
\frac{J_{4,30}J_{14,30}}{\overline{J}_{6,15}}
  \frac{J_{10}J_{15}^2}{J_{6,30}J_{30}^2}
    + q \frac{J_{2,30}J_{8,30}}{\overline{J}_{3,15}}\frac{J_{10}J_{15}^2}{J_{6,30}J_{30}^2}
    =\frac{\overline{J}_{1,4}}{\overline{J}_{3,12}}\frac{J_{18,30}J_{3}}{J_{6}},
\end{equation}
which is obtained by dividing identity (\ref{equation:id-2}) by $J_{6,30}^2J_{12,30}/J_{30}^2$.

To prove identity (\ref{equation:tenth-id-4}), we use identity (\ref{equation:id4-D3-expansion}) and Lemma \ref{lemma:tenth-id-4} to obtain
{\allowdisplaybreaks \begin{align*}
\chi&(q^9)+q^2\frac{X(\omega q)- X(\omega^2 q)}{\omega-\omega^2 }\\
&=-\frac{q^3}{\omega-\omega^2}\frac{J_{30}^4J_{15}^2J_{6,15}}{J_{18,45}\overline{J}_{9,15}J_{24,30}}\cdot \\
&\ \ \ \ \  \cdot\Big [  \frac{\omega }{j(\omega q^4;q^{30})j(\omega^2  q^{14};q^{30})j(\omega^2 q^5;q^{30})}
-  \frac{\omega^2}{j(\omega^2 q^4;q^{30})j(\omega  q^{14};q^{30})j(\omega q^5;q^{30})}\Big ]\\
&\ \ +\frac{q^4}{\omega-\omega^2}\frac{J_{30}J_{15}^5J_{3,15}}{J_{27,45}\overline{J}_{3,15}J_{12,30}}\cdot\\
&\ \ \ \ \  \cdot \Big [\frac{\omega^2}{j(\omega q;q^{15})j(\omega  q^{4};q^{15})j(-\omega^2 q^5;q^{15})} 
-\frac{\omega }{j(\omega^2 q;q^{15})j(\omega^2  q^{4};q^{15})j(-\omega q^5;q^{15})} \Big ]\\
&=-\frac{q^3}{1-\omega}\frac{J_{30}^4J_{15}^2J_{6,15}}{J_{18,45}\overline{J}_{9,15}J_{24,30}}
\frac{J_{90}^3J_{4,30}J_{14,30}J_{5,30}}{J_{30}^9J_{12,90}J_{42,90}J_{15,90}} \cdot \\
&\ \ \ \ \  \cdot\Big [ j(\omega^2 q^4;q^{30})j(\omega  q^{14};q^{30})j(\omega q^5;q^{30})
-\omega j(\omega q^4;q^{30})j(\omega ^2 q^{14};q^{30})j(\omega^2 q^5;q^{30})\Big ]\\
&\ \ -\frac{q^4}{1-\omega}\frac{J_{30}J_{15}^5J_{3,15}}{J_{27,45}\overline{J}_{3,15}J_{12,30}}
 \frac{J_{45}^3J_{1,15}J_{4,15}\overline{J}_{5,15}}{J_{15}^9J_{3,45}J_{12,45}\overline{J}_{15,45}}\cdot \\
&\ \ \ \ \  \cdot \Big [j(\omega q;q^{15})j(\omega  q^{4};q^{15})j(-\omega^2 q^5;q^{15})
-\omega j(\omega^2 q;q^{15})j(\omega^2  q^{4};q^{15})j(-\omega q^5;q^{15}) \Big ].
\end{align*}}%
Using the relation (\ref{equation:Weierstrass}) with $q\rightarrow q^{30}$, $a=q^{9}$, $b=\omega^2 q^5 $, $c=\omega q^5$, $d=\omega $, and with $q\rightarrow q^{15}$, $a=q^9 $, $b=\omega q^5$, $c=\omega^2 q^5$, $d=-q^5 $, yields
{\allowdisplaybreaks \begin{align*}
\chi&(q^9)+q^2\frac{X(\omega q)- X(\omega^2 q)}{\omega-\omega^2 }\\
&=-\frac{q^3}{1-\omega}\frac{J_{30}^4J_{15}^2J_{6,15}}{J_{18,45}\overline{J}_{9,15}J_{24,30}}
\frac{J_{90}^3J_{4,30}J_{14,30}J_{5,30}}{J_{30}^9J_{12,90}J_{42,90}J_{15,90}}
\Big [ \frac{j(\omega q^9;q^{30})j(\omega^2q^9;q^{30})j(\omega;q^{30})J_{10,30}}{J_{5,30}}\Big ]\\
&\ \ \ \ \ \ \ \ \ \ -\frac{q^4}{1-\omega}\frac{J_{30}J_{15}^5J_{3,15}}{J_{27,45}\overline{J}_{3,15}J_{12,30}}
 \frac{J_{45}^3J_{1,15}J_{4,15}\overline{J}_{5,15}}{J_{15}^9J_{3,45}J_{12,45}\overline{J}_{15,45}}
  \Big [\frac{\overline{J}_{1,15}\overline{J}_{4,15}J_{5,15}j(\omega^2;q^{15})}{j(-\omega;q^{15})}\Big ]\\
&=-q^3\cdot \frac{J_{4,10}}{\overline{J}_{6,15}}\frac{J_{15}^2J_{30}}{J_{12,30}J_{6,30}}
-q^4\cdot \frac{J_{2,10}}{\overline{J}_{3,15}}\frac{J_{15}^2J_{30}}{J_{12,30}^2},
\end{align*}}%
where the last line follows from simplification.  Thus proving (\ref{equation:tenth-id-4}) is equivalent to showing
{\allowdisplaybreaks \begin{align}
-q^3 \frac{J_{4,10}}{\overline{J}_{6,15}}\frac{J_{15}^2J_{30}}{J_{12,30}J_{6,30}}
-q^4 \frac{J_{2,10}}{\overline{J}_{3,15}}\frac{J_{15}^2J_{30}}{J_{12,30}^2}
=-q^3\frac{\overline{J}_{1,4}}{\overline{J}_{3,12}}\frac{J_{6,30}J_{3}}{J_{6}},
\end{align}}%
which is obtained by dividing identity (\ref{equation:id-2}) by $J_{6,30}J_{12,30}^2/J_{30}^2$.

\section*{Acknowledgements}
We would like to thank George Andrews for suggesting the problem to find short proofs for identities (\ref{equation:RLN-id-five}) and (\ref{equation:RLN-id-six}), this in turn led to the present paper.   We would also like to thank Dean Hickerson for his help in finding new proofs of identities (\ref{equation:id-1}) and (\ref{equation:id-2}).   The new proofs replace the old proofs that relied on modularity.

\end{document}